\newtheorem{definition}{Definition}[section]
\newtheorem{theorem}{Theorem}[section]
\newtheorem{lemma}[theorem]{Lemma}
\newcommand{\y}{\mathsf{y}}
\newcommand{\D}{\mathcal{D}}
\newcommand{ \x}{\mathsf{x}}
\newcommand{\z}{\mathsf{z}}
\title{On a class of coupled fractional nonlinear singular boundary value problems arising in dusty fluid models}
\author{Lok Nath Kannaujiya$^1$\thanks{lok\_2121ma04@iitp.ac.in}, Narendra Kumar$^2$\thanks{narendrakumar.knp@gmail.com}, Amit K. Verma$^3$\thanks{Corresponding author: akverma@iitp.ac.in}\\\small{\it{$^{1,3}$ Department of Mathematics,}} \\\small{\it{Indian Institute of Technology Patna,}}\\\small{\it{ Bihta, Patna 801103, (BR) India.}}\\
\small{\it{$^{2}$ Department of Mathematics,}} \\\small{\it{Indian Institute of Technology Jodhpur,}}\\\small{\it{ Karwar, Jodhpur 342030, India.}}}
\date{}
\begin{document}
\maketitle
\begin{abstract}
In this article, we introduce a new class of coupled fractional Lane-Emden boundary value problems. We employ a novel approach, the fractional Haar wavelet collocation method with the Newton-Raphson method. We analyze the conditions in two cases to present numerical experiments related to the defined system of fractional differential equations. To validate the accuracy of the proposed method we present the convergence of the method, and we demonstrate the method's effectiveness through five numerical experiments, highlighting real-world applications of fractional differential equations. Using figures and tables, we show that the residual error decreases as we increase the value of the maximum level of resolution $J$ while keeping the order of derivatives fixed, and similar trends also observe when $J$ is fixed and vary the order of fractional derivatives. We demonstrate that Mathematica software can be used effectively to solve such nonlinear singular fractional boundary value problems.\\
\textit{Keywords:} Nonlinear, Singular, Lane-Emden, Caputo Fractional Derivative, Haar wavelet.\\
\textit{AMS Subject Classification 2020:}  34B16, 34A08, 34K37, 26A33.
\end{abstract}

\section{Introduction}
The development of fractional calculus over the last century has established fractional derivatives as a vital tool for modeling natural phenomena. Many operators are used for fractional differential equations, in which the most useful derivatives are, Grunwald-Letnikov fractional derivative \cite{scherer2011grunwald, miller1993introduction}, Riemann-Liouville fractional derivative (R-L FD) \cite{kilbas2006theory, miller1993introduction} and Caputo derivative \cite{odibat2006approximations, podlubny1999introduction, kumari2024novel}. Numerous studies have investigated the solvability of nonlinear fractional differential equations (FDEs) involving these derivatives \cite{bai2005positive, hajji2014efficient, klimek2018exact, zhang2006existence, ibrahim2007existence}. Researchers \cite{zhang2008eigenvalues, al2010numerical, hajji2014efficient} have also explored fractional versions of singular Sturm-Liouville eigenvalue problems using various analytical and numerical techniques. Ibrahim et al. \cite{ibrahim2007existence} investigate the existence and uniqueness of solutions for multi-order fractional differential equations. Lee et al. \cite{lee2024novel} demonstrate the predictor-corrector method for nonlinear initial value problems and both, linear and nonlinear reaction-diffusion of fractional order.

In this paper, we consider the following a class of coupled fractional Lane-Emden equations using Caputo fractional derivatives,
\begin{equation}\label{P4_problem}
\begin{aligned}
   & \D^{\alpha_1} \y(\x) + \frac{k_1}{\x^{\gamma_1}} \D^{\beta_1} \y(\x) = f_1(\x,\y,\z), \quad 1 < \alpha_1 \leq 2, ~0 < \beta_1 \leq 1, ~k_1 \geq 0, ~\gamma_1>0, \;  \x \in (0,1), \\
&\D^{\alpha_2} \z(\x) + \frac{k_2}{\x^{\gamma_2}} \D^{\beta_2} \z(\x) = f_2(\x,\y,\z), \quad 1 < \alpha_2 \leq 2, ~0 < \beta_2 \leq 1,  ~k_2 \geq 0, ~\gamma_2>0, \;  \x \in (0,1),
\end{aligned}
\end{equation}
subject to the following conditions,
\begin{equation}\label{P4_condition}
\begin{aligned}
     & a\; \y(0) + b\; \y'(0)= \mu_1, \quad c\; \z(0) + d\; \z'(0) =\mu_2,\\
  & \y(1)= \mu_3 \eta_1 \z(\nu_1), \quad \z(1) = \mu_4 \eta_2 \y(\nu_2),
\end{aligned}
\end{equation}
where $f_1(\x,\y,\z), ~f_2(\x,\y,\z)$ are continuous functions, $a, ~b,~ c,~ d,~ \mu_1,~ \mu_2,~ \mu_3,~ \mu_4,$ $\eta_1, ~\eta_2,~ \nu_1 , ~\nu_2$ are non-negative real numbers and $\D^{\alpha_1},~ \D^{\alpha_2},~ \D^{\beta_1},~ \D^{\beta_2}$ are Caputo fractional derivatives. We choose two scenarios of conditions \eqref{P4_condition} to demonstrate the numerical experiments as:\\
\begin{equation}\label{P4_condition_case1}
    \textbf{Case I:} \quad \quad \y'(0)= \frac{1}{b} \left(\mu_1 - a \y(0) \right), \quad  \z'(0) = \frac{1}{d} \left( \mu_2 - c \z(0) \right), \quad \y(1)= \mu_3 \eta_1 \z(\nu_1), \quad \z(1) = \mu_4 \eta_2 \y(\nu_2).
\end{equation}
\begin{equation}\label{P4_condition_case2}
    \textbf{Case II:} \quad \quad \y(0)= \frac{1}{a} \left( \mu_1 -b \y'(0) \right) ,\quad \z(0)= \frac{1}{c} \left(\mu_2 - d \z'(0) \right), \quad \y(1)= \mu_3 \eta_1 \z(\nu_1), \quad \z(1) = \mu_4 \eta_2 \y(\nu_2).
\end{equation}
In Case I \eqref{P4_condition_case1}, If $a = \mu_1 =0$ and $c= \mu_2=0,$ then $\y'(0)=0$ and $\z'(0)=0.$ In Case II \eqref{P4_condition_case2}, we consider two sub-cases. In the first sub-case, if $b=0,$ then $\y(0)= \frac{\mu_1}{a}$ and if $d=0,$ then $\z(0)= \frac{\mu_2}{c}.$ In the second sub-case, if $b=\mu_1=0,$ then $\y(0)=0,$ and if $d=\mu_2=0$ then $\z(0)=0.$

Numerous researchers have explored the solution of Lane-Emden equations \cite{parand2024neural, chandrasekhar1957introduction, chandrasekhar1939book, chambre1952solution, verma2021note, verma2020applications}. For further exploration of numerical and analytical solutions of Lane-Emden equations, readers are encouraged to refer \cite{verma2020review}. For the case $\alpha_1=\alpha_2=2, ~ \beta_1=\beta_2=1$, the proposed problem \eqref{P4_problem} reduce to the classical form of coupled Lane-Emden equations, which have been studied by many scholars \cite{ala2023numerical,rach2014solving,verma2021haar,singh2020solving, kumar2023hybrid}. One of the key uses of the Lane-Emden equation is in studying the transport and chemical reactions of species within a porous catalytic particle. In mixtures with multiple components, species transport inside a porous catalyst cannot simply be described using Fick's law instead, it should follow the Maxwell-Stefan equations \cite{flockerzi2011coupled}. Applying these equations to mass transport in porous solids results in the Dusty Gas Model \cite{jackson1977transport}. In this model, the solid catalyst is treated like "dust," meaning it's made up of large, stationary molecules that interact with the fluid species moving through the catalyst's porous structure. This model captures three types of transport: bulk and Knudsen diffusion, surface diffusion, and viscous Poiseuille-type flow. Originally developed for ideal gas mixtures, the Dusty Gas Model was later extended to non-ideal fluid mixtures, becoming the Dusty Fluid Model. \cite{higler2000nonequilibrium, krishna1997maxwell, verma2021haar, xie2023effective} provides more information on the coupled Lane-Emden equations in the context of these dusty fluid models.

 In 1981, Anderson et al. \cite{anderson1981complementary} and, in 2019 Zahoor et al. \cite{raja2018new} studied the steady state of a spherically symmetrical heat conduction problem accurately describes heat conduction issues affecting the human head. However, the use of a febrifuge, such as acetaminophen or ibuprofen, renders these conditions irrelevant. Wang \cite{wang2020new} introduced a fractional version of the heat conduction problem by modifying the heat conduction equation studied in \cite{anderson1981complementary, raja2018new} to account for the effects of a febrifuge. Some studies have been conducted on fractional Lane-Emden equations \cite{saeed2017haar, wang2020new, lok2024, narendra2025, dehda2024numerical}. The study of coupled fractional-order differential equations is crucial, as such systems frequently emerge in various scientific and engineering applications \cite{Ahmad2023coupled, chen2008numerical, gafiychuk2008mathematical, su2009boundary, ghosh2015solution}. Bai and Fang \cite{bai2004existence} demonstrated the existence of positive solutions for a singular coupled system of fractional order. Wang et al. \cite{wang2018nonlocal} examined a coupled system of fractional differential equations with $m-$point fractional boundary conditions, establishing existence and uniqueness results using conventional fixed-point theorems. In 2019, Zhai and Reh \cite{zhai2019coupled} further contributed to the field by proving the existence and uniqueness of solutions for a novel coupled system of nonlinear fractional differential equations. More recently, Dehda et al. \cite{dehda2024numerical} applied the Haar wavelet method to analyze a regular coupled dynamical system. More details about the system of fractional differential equations can be found in \cite{Rehman2011note, AhmadLuca, xu2014iterative, liu2017bifurcation, zhai2018unique, zhang2016existence, liu2016extremal, su2009boundary, daftardar2004}. However, to the best of our knowledge, the Haar wavelet collocation method has not yet been applied to coupled fractional differential systems with singularities. This gap presents an opportunity for further research in this area.

In recent years, wavelet techniques have gained prominence as efficient tools for solving calculus-related problems. Haar wavelets are favored due to their advantageous properties, such as ease of application, orthogonality, and compact support. The compact support of the Haar wavelet basis allows for the straightforward incorporation of various boundary conditions into numerical algorithms. Since the Haar wavelet basis is linear and piecewise defined, it lacks differentiability. Consequently, rather than differentiation, integration is used to calculate the coefficients. The approximation of orthogonal functions has proven essential in applications such as parameter identification analysis and optimal control \cite{chen1997haar, chen2010wavelet, cattani2004haar, shiralashetti2020haar}. A key advantage of this approach is its ability to transform differential equations into algebraic systems. The Haar wavelet approach for solving linear and nonlinear differential equations, fractional differential equations, integral equations, and integro-differential equations was introduced by Lepik \cite{lepik2005numerical,lepik2007application, lepik2007numerical}. Hariharan et al. \cite{hariharan2009haar} employed the Haar wavelet approach to solve Fisher's equation. This approach is notably efficient for handling boundary value problems with a small number of grid points. Recent studies have explored the Haar wavelet method for tackling fractional differential equations with singularities. In \cite{lok2024}, authors introduced both uniform and non-uniform Haar wavelet methods to address fractional initial value problems with singularities. Izadi et al. \cite{Izadi_2024} investigated a fractional singular boundary value problem (BVP) incorporating Liouville-Caputo fractional derivatives and mixed boundary conditions, employing the quasilinearization method in conjunction with generalized Genocchi polynomials. Narendra et al. \cite{narendra2025} focused on solving nonlinear fractional Lane-Emden equations through the uniform fractional Haar wavelet collocation method combined with quasilinearization.

In this article, we introduce a new class of coupled fractional Lane-Emden equations \eqref{P4_problem} with conditions \eqref{P4_condition} and propose a numerical approach called the fractional Haar wavelet collocation method to simulate the numerical solution for the coupled fractional differential equation \eqref{P4_problem}. We use the fractional integral of the Haar wavelet to derive the method. The proposed method transforms the coupled Lane-Emden equation into system of non-linear algebraic equations, and then, with the help of the Newton-Raphson method, we capture the numerical solutions. We present the analysis to investigate the convergence of the method.  We conduct five experiments to show the effectiveness of our method. At $\alpha_1=\alpha_2=2, ~\beta_1=\beta_2 =1,$ the proposed problem \eqref{P4_problem} reduces to the classical form of coupled Lane Emden equations. We also show two real-world applications of fractional Lane-Emden equations in the dusty fluid model, highlighting their practical use. The proposed approach is novel and opens doors for further research in various directions. We have used Mathematica software to compute all the results of the paper. The work in this paper can be further used to develop Mathematica package.

The structure of this article is as follows: Section \ref{prelim} introduces the fundamental concepts of multiresolution analysis, Haar wavelets, and fractional derivatives. Section \ref{P4_collocation_method} presents the fractional Haar wavelet collocation method along with its algorithmic implementation. Section \ref{P4_convergence} discusses the convergence of the proposed method. Section \ref{P4_Examples} provides numerical experiments to illustrate its effectiveness. Finally, we present the conclusion in section \ref{P4_conclusion}.

\section{Preliminary}\label{prelim}
In this section, we present multiresolution analysis, fractional derivatives, as well as Haar wavelets and their fractional integrals.

\subsection{Multiresolution Analysis (MRA)}
Multiresolution analysis (MRA) is a mathematical framework through which signals or functions can be analyzed on multiple scales. It is one of the components of wavelet theory, and it also helps in constructing wavelet bases. MRA achieves a hierarchical decomposition of the signal that captures both detailed and coarse features of the signal. This method is effective for capturing localized or transient features of a signal like spikes or discontinuities, which Fourier methods cannot adequately represent. One of the most elementary and illustrative examples of MRA is the Haar wavelet, which is widely recognized as the first wavelet and serves as a main example of MRA. Haar MRA is widely known for its primary attributes – compact support, orthogonality, density, and, most importantly, for its contribution to wavelet-based signal analysis \cite{wang2010new, sweldens1998}.
\begin{definition}\label{MRA_def}
    \cite{mallat1989, priyadarshi2018wavelet}  The Haar multiresolution analysis of $L^2(\mathbb{R})$ with scaling function $\chi_{[0,1]} (\x) = \phi (x)$ consist of a sequence of closed subspaces $\mathsf{V}_j \; (j \in \mathbb{Z})$ of $ L^2 (\mathbb{R})$ satisfying,
    \begin{enumerate}
        \item $\mathsf{V}_j \subset \mathsf{V}_{j+1}, \quad \forall j \in \mathbb{Z}.$
        \item $f(x) \in \mathsf{V}_j \Leftrightarrow f(2x) \in \mathsf{V}_{j+1}, \quad \forall j \in \mathbb{Z}.$
        \item $\cap_{j \in \mathbb{Z}} \mathsf{V}_j = \{ 0 \}.$
        \item $ \overline{\cup_{j \in \mathbb{Z}} \mathsf{V}_j} = L^2 (\mathbb{R}).$
        \item  The function $\phi(x) \in \mathsf{V}_0$ the set $ \{\phi(x-k) : k \in \mathbb{Z}\}$ forms an orthonormal basis for $\mathsf{V}_0.$
    \end{enumerate}
    The sequence of wavelet subspaces $W_j$ of $L^2(\mathbb{R})$ are such that $V_{j+1} = V_j \bigoplus W_j.$ Moreover, closure of $\bigoplus_{j \in \mathbb{Z}} W_j$ is dense in $L^2 (\mathbb{R})$ with respect to $L^2$ norm.
\end{definition}
\begin{theorem}
    Mallat \cite{mallat1989} provides that in an orthogonal multiresolution analysis (MRA) with a scaling function $\phi,$ there is a wavelet $\psi \in L^2 (\mathbb{R})$ such that for each $j \in \mathbb{Z},$ the collection $\{ \psi_{j, k} \}_{k \in \mathbb{Z}}$ forms an orthonormal basis for the space $W_j.$ Hence, the family $\{\psi_{j,k}\}_{j,k \in \mathbb{Z}}$ is an orthonormal basis for $L_2(\mathbb{R}).$
\end{theorem}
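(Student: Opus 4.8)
The plan is to follow Mallat's classical construction, building the wavelet $\psi$ explicitly from the scaling function $\phi$ via a two-scale (refinement) relation and then verifying the claims in sequence: orthonormality of the integer translates of $\psi$, containment in and spanning of $W_0$, and finally the passage from $W_0$ to all of $L^2(\mathbb{R})$. Throughout I write $\psi_{j,k}(x) = 2^{j/2}\psi(2^j x - k)$ and similarly for $\phi_{j,k}$, and I work on the Fourier side with $\widehat{f}(\xi) = \int f(x) e^{-i x \xi}\,dx$.

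First I would exploit the nesting $V_0 \subset V_1$ from property (1). Since the dilation property (2) together with property (5) shows that $\{\phi_{1,k}\}_{k}$ is an orthonormal basis of $V_1$, the scaling function admits an expansion $\phi = \sum_k h_k \phi_{1,k}$ with $(h_k) \in \ell^2(\mathbb{Z})$, i.e.\ $\phi(x) = \sqrt{2}\sum_k h_k \phi(2x - k)$. Taking Fourier transforms yields the refinement identity $\widehat{\phi}(\xi) = m_0(\xi/2)\,\widehat{\phi}(\xi/2)$, where the low-pass filter $m_0(\xi) = \tfrac{1}{\sqrt 2}\sum_k h_k e^{-ik\xi}$ is $2\pi$-periodic. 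Encoding the orthonormality of $\{\phi(\cdot - k)\}$ through the standard periodization identity $\sum_{\ell}|\widehat\phi(\xi + 2\pi \ell)|^2 = \text{const}$ then forces the quadrature-mirror condition $|m_0(\xi)|^2 + |m_0(\xi+\pi)|^2 = 1$.

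Next I would define the candidate wavelet by prescribing its Fourier transform, $\widehat\psi(\xi) = m_1(\xi/2)\,\widehat\phi(\xi/2)$ with the high-pass filter $m_1(\xi) = e^{-i\xi}\,\overline{m_0(\xi + \pi)}$; equivalently $\psi(x) = \sqrt 2 \sum_k (-1)^k\,\overline{h_{1-k}}\,\phi(2x-k)$, so that $\psi \in V_1$. The point of this choice is that the modulation matrix $\big(\begin{smallmatrix} m_0(\xi) & m_0(\xi+\pi)\\ m_1(\xi) & m_1(\xi+\pi)\end{smallmatrix}\big)$ is unitary for a.e.\ $\xi$. From the first-row relation (the quadrature condition) I would deduce that $\{\psi(\cdot - k)\}$ is orthonormal; from the off-diagonal relation $m_0\overline{m_1} + m_0(\cdot+\pi)\overline{m_1(\cdot+\pi)} = 0$ that $\psi \perp V_0$, so $\psi \in W_0$; and---the crux of the argument---from unitarity of the full matrix that every $f \in V_1$ decomposes uniquely into a $V_0$-part plus a $W_0$-part built from $\{\phi(\cdot-k)\}$ and $\{\psi(\cdot-k)\}$ respectively. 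This last step is the main obstacle: proving that the wavelet translates do not merely sit inside $W_0$ but actually \emph{span} it requires translating the invertibility of the modulation matrix into a statement about closed subspaces, which is where the Fourier bookkeeping is most delicate. Concretely, for $f \in V_1$ one writes $\widehat f(\xi) = m_f(\xi/2)\widehat\phi(\xi/2)$ with $m_f$ being $2\pi$-periodic, expresses $m_f$ in the basis $\{m_0, m_1\}$ supplied by unitarity, and reads off the two orthogonal components; this gives $V_1 = V_0 \oplus \overline{\operatorname{span}}\{\psi(\cdot - k)\}$, and since $V_1 = V_0 \oplus W_0$ by definition, the closed span of the wavelet translates must equal $W_0$.

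Finally I would transfer this to arbitrary scales and assemble the global basis. Applying the dilation $f \mapsto f(2^j \cdot)$ and invoking property (2), the identity for $W_0$ dilates to show that $\{\psi_{j,k}\}_{k \in \mathbb{Z}}$ is an orthonormal basis of $W_j$ for every $j$. The spaces $W_j$ are mutually orthogonal, since for $j' \geq j+1$ one has $W_j \subset V_{j+1} \subset V_{j'}$ while $W_{j'} \perp V_{j'}$, whence $W_j \perp W_{j'}$. It then remains to show $\bigoplus_{j \in \mathbb{Z}} W_j = L^2(\mathbb{R})$, which is exactly the content of the telescoping decomposition $V_{j+1} = V_j \oplus W_j$ combined with properties (3) and (4): density of $\bigcup_j V_j$ guarantees that the orthogonal sum exhausts $L^2(\mathbb{R})$ from above, while triviality of $\bigcap_j V_j$ removes any residual component from below. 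Collecting the per-scale orthonormal bases then produces the claimed orthonormal basis $\{\psi_{j,k}\}_{j,k \in \mathbb{Z}}$ of $L^2(\mathbb{R})$.
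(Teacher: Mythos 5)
The paper never proves this statement: it appears in the preliminaries as quoted background, with the proof outsourced entirely to the citation of Mallat's 1989 paper, so there is no in-paper argument to compare yours against. What your proposal does is reconstruct the classical proof that the citation points to, and it does so correctly. The chain you describe --- the refinement identity $\widehat\phi(\xi) = m_0(\xi/2)\,\widehat\phi(\xi/2)$ from $V_0 \subset V_1$, the quadrature-mirror condition $|m_0(\xi)|^2 + |m_0(\xi+\pi)|^2 = 1$ obtained by periodizing the orthonormality of the translates, the high-pass filter $m_1(\xi) = e^{-i\xi}\,\overline{m_0(\xi+\pi)}$, unitarity of the modulation matrix yielding orthonormality of $\{\psi(\cdot-k)\}$, orthogonality to $V_0$, and the spanning of $W_0$, followed by dilation to every $W_j$ and the telescoping decomposition using properties (3) and (4) --- is exactly Mallat's construction, and you correctly identify the spanning step as the delicate point. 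Two places would need slightly more care in a fully written version: first, in the spanning argument the periodic multipliers expressing $m_f$ in terms of $m_0$ and $m_1$ must be shown to lie in $L^2(0,2\pi)$, so that they actually define elements of $V_0$ and of the closed span of $\{\psi(\cdot-k)\}$ rather than merely formal expansions; second, the assertion that triviality of $\bigcap_j V_j$ ``removes any residual component from below'' is the statement that $P_{V_{-N}}f \to 0$ strongly as $N \to \infty$, which rests on the standard lemma that orthogonal projections onto a decreasing chain of closed subspaces converge strongly to the projection onto the intersection. Both are routine to fill in, so your proposal is a sound substitute for the proof the paper omits.
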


\subsection{Haar Wavelet}\label{P4_2.1_uniform}
In 2014, Lepik and Hein \cite{lepik2014haar} employed an equal step size of $\Delta \x = \frac{1}{2M}$ to partition the interval $[0,1]$ into $2M$ sub-intervals. The mother wavelet function of the Haar wavelet, denoted as $h_l (\x)$ for $l > 1$, is defined as follows:
\begin{equation}
h_l (\x)=
 \begin{cases}  1, & \vartheta_1(l) \leq \x < \vartheta_2(l),\\
-1, & \vartheta_2(l) \leq \x < \vartheta_3(l),\\
0, & otherwise,
\end{cases}
\end{equation}
such that the values $\vartheta_1(l), \vartheta_2(l),$ and $\vartheta_3(l)$ are given by,
\begin{equation}\label{P4_lepikdef_cof}
\vartheta_1(l) = 2k \left(\frac{M}{m}\right) \Delta \x, \quad
\vartheta_2(l) = (2k+1) \left(\frac{M}{m}\right) \Delta \x, \quad \vartheta_3(l) = 2(k+1) \left(\frac{M}{m}\right) \Delta \x,
\end{equation}
where $l= m+k + 1$, and the parameters $J$, $M$, $j$, $m$, and $k$ are defined such that $J$ represents the maximum level of resolution, $M = 2^J$, $j =0,1,\cdots,J$, $m = 2^j$, and $k = 0,1, \cdots, m-1$. For $l=1$, the Haar wavelet function $h_1(\x)$ is defined as,
 \begin{equation}
 h_1(\x) = \begin{cases}  1, & 0 \leq \x <1,\\
 0, &  otherwise.\end{cases}
 \end{equation}
The integral of the Haar function, $\mathcal{P}_{v,l} (\x)$ for $l >1$ is defined as: 
\begin{equation}
\mathcal{P}_{v,l} (\x) = \begin{cases} 
0, & \x < \vartheta_1(l),\\
\frac{1}{v!}[ \x- \vartheta_1(l)]^v,  & \vartheta_1(l) \leq \x \leq \vartheta_2(l),\\
\frac{1}{v!} \{ [ \x- \vartheta_1(l)]^v - 2 [ \x- \vartheta_2(l)]^v \}, ~& \vartheta_2(l) \leq \x \leq \vartheta_3(l),\\
\frac{1}{v!} \{ [ \x - \vartheta_1(l)]^v -2 [ \x- \vartheta_2(l)]^v  + [ \x - \vartheta_3(l)]^v \}, & \x > \vartheta_3(l).
\end{cases}
\end{equation}
For $l=1,$ we have $\vartheta_1(1) =0, ~ \vartheta_2(1) = \vartheta_3(1)=1,$ and the expression for $\mathcal{P}_{v,1}$ becomes,
$$\mathcal{P}_{v,1} = \frac{\x^v}{v!}.$$
We use the collocation method for computation and define collocation points as follows:
\begin{eqnarray}\label{collocation}
&&\eta_{cl} = \frac{\tilde{\eta}_{cl-1} + \tilde{\eta}_{cl}}{2},~~cl= 1,2,\cdots,4M,
\end{eqnarray}
where $\tilde{\eta}_{cl}$ are the grid points and defined by the following equation,
\begin{eqnarray*}
&& \tilde{\eta}_{cl} = cl\Delta \x, ~~cl= 0,1,\cdots,4M.
\end{eqnarray*}
All the Haar wavelet functions are orthogonal to each other, this fundamental property plays a crucial role in making them effective for numerical analysis\cite{chen1997haar},
\begin{equation}\label{P4_haar_ortho}
    \int_0^1 h_l(\x) h_r (\x) d\x = \begin{cases}
        2^{-j}, & l=r,\\
        0, & l\neq r.
    \end{cases}
\end{equation}
\subsection{Some Definitions} Several definitions of fractional derivatives and integrals are available in \cite{SALESTEODORO2019195}. In this article, we use the following two versions.\\
\textbf{Riemann-Liouville Fractional Integral \cite{podlubny1999introduction,  miller1993introduction}:}
           The Riemann-Liouville fractional integral of order $\alpha$ to function $\psi (\varkappa)$ is given as,
\begin{equation}
I^\alpha \psi (\varkappa) = \frac{1}{\Gamma{\alpha}} \int_0^\varkappa (\varkappa-\tau)^{\alpha -1} \psi (\tau) d\tau, ~~~~~\alpha >0.
\end{equation}
\textbf{Caputo Fractional Derivative \cite{podlubny1999introduction, odibat2006approximations, SALESTEODORO2019195}:} The Caputo fractional derivative of a function $\mathfrak{f}(\xi)$ of order $\gamma $, which is absolute continuous differentiable function up to $(n-1)$ on $[a,b]$, is defined as,
         \begin{equation}
             D^{\gamma} \mathfrak{f}(\xi) = \frac{1}{\Gamma(n-\gamma)} \int_{0}^{\xi} (\xi- \tau)^{n-\gamma -1} \frac{d^n}{d\tau^n} \mathfrak{f} (\tau) d\tau,~~\gamma > 0, ~(n-1 < \gamma \leq n, n \in \mathbb{N}).
         \end{equation}
In what follows, we recall some important lemmas from \cite{podlubny1999introduction, narendra2025} that will be used in this paper.
\begin{lemma}\label{P4_subsec2_lemma2_1}
 Let $\alpha >0, ~ \mu >-1$ and $ \mathfrak{f}(\xi) = \xi^\mu,$ then, $$ I^\alpha \mathfrak{f}(\xi) = \frac{\Gamma(\mu +1)}{\Gamma (\mu + \alpha + 1)} \xi^{(\mu + \alpha)}.$$
\end{lemma}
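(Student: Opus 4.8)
The plan is to compute the Riemann-Liouville fractional integral of the power function $\mathfrak{f}(\xi) = \xi^\mu$ directly from its definition and to reduce the resulting integral to an Euler Beta integral. Starting from
$$I^\alpha \xi^\mu = \frac{1}{\Gamma(\alpha)} \int_0^\xi (\xi - \tau)^{\alpha - 1} \tau^\mu \, d\tau,$$
I would first observe that the hypotheses $\alpha > 0$ and $\mu > -1$ guarantee that the integrand is integrable on $(0,\xi)$: the factor $(\xi-\tau)^{\alpha-1}$ is integrable near the upper endpoint precisely when $\alpha > 0$, while $\tau^\mu$ is integrable near $0$ precisely when $\mu > -1$. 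Thus the integral defining $I^\alpha \xi^\mu$ is well posed for every $\xi > 0$.

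Next I would introduce the substitution $\tau = \xi s$, so that $d\tau = \xi \, ds$ and the limits $\tau = 0, \xi$ correspond to $s = 0, 1$. Writing $(\xi - \tau)^{\alpha-1} = \xi^{\alpha-1}(1-s)^{\alpha-1}$ and $\tau^\mu = \xi^\mu s^\mu$, and factoring out the powers of $\xi$, the integral collapses to
$$I^\alpha \xi^\mu = \frac{\xi^{\mu + \alpha}}{\Gamma(\alpha)} \int_0^1 s^\mu (1-s)^{\alpha - 1} \, ds.$$
The remaining integral is exactly the Beta function $B(\mu + 1, \alpha)$, whose convergence requires the same conditions $\mu > -1$ and $\alpha > 0$ already imposed.

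Finally I would invoke the standard Beta-Gamma identity $B(\mu+1, \alpha) = \frac{\Gamma(\mu+1)\,\Gamma(\alpha)}{\Gamma(\mu + \alpha + 1)}$; substituting this into the previous display and cancelling the common factor $\Gamma(\alpha)$ yields
$$I^\alpha \xi^\mu = \frac{\Gamma(\mu+1)}{\Gamma(\mu + \alpha + 1)}\, \xi^{\mu + \alpha},$$
which is the claimed formula. I expect no genuine obstacle, as the argument is a routine change of variables combined with the Beta-Gamma relation; the only point meriting a line of care is verifying that the convergence conditions for the Beta integral coincide exactly with the stated hypotheses $\alpha > 0$ and $\mu > -1$, which they do.
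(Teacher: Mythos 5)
Your proof is correct, and it is the standard argument: the paper does not prove this lemma at all (it merely recalls it from the cited references \cite{podlubny1999introduction, narendra2025}), and the substitution $\tau = \xi s$ reducing the Riemann--Liouville integral to the Beta integral $B(\mu+1,\alpha)$, followed by the Beta--Gamma identity, is exactly the proof given in those standard sources. Your extra care in checking that integrability at both endpoints matches the hypotheses $\alpha>0$, $\mu>-1$ is a welcome touch, not a deviation.
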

\begin{lemma}\label{P4_subsec2_lemma2_2}
If $ f(\xi) = \mathfrak{c},$ where $\mathfrak{c}$ is constant and $\alpha >0, ~n = \lceil \alpha \rceil, $ then $D^\alpha f(\xi) =0.$
\end{lemma}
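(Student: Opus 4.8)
The plan is to read off the conclusion directly from the integral definition of the Caputo derivative given above, the crucial feature being that this operator first applies an \emph{ordinary} (integer-order) derivative of order $n = \lceil \alpha \rceil$ to the function and only afterwards integrates against the singular kernel. Because a constant is annihilated by any positive-order classical derivative, the integrand will collapse to zero before the integration is even performed.

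Concretely, I would fix $n = \lceil \alpha \rceil$ and note that $\alpha > 0$ forces $n \geq 1$. Substituting $f(\tau) = \mathfrak{c}$ into
\[
D^{\alpha} f(\xi) = \frac{1}{\Gamma(n-\alpha)} \int_{0}^{\xi} (\xi - \tau)^{n-\alpha-1}\, \frac{d^{n}}{d\tau^{n}} f(\tau)\, d\tau,
\]
the key step is the elementary identity $\dfrac{d^{n}}{d\tau^{n}}\mathfrak{c} = 0$, valid for every $n \geq 1$. Hence the integrand vanishes identically on $(0,\xi)$, and the whole expression equals $0$; note that for non-integer $\alpha$ the weight $(\xi-\tau)^{n-\alpha-1}$ is integrable and the prefactor $1/\Gamma(n-\alpha)$ is finite, so no convergence issue arises from the kernel's singularity.

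There is essentially no hard step here; the only point requiring a word of care is the boundary case in which $\alpha$ is itself a positive integer, so that $n = \alpha$ and the factor $\Gamma(n-\alpha) = \Gamma(0)$ in the stated formula degenerates. In that situation the Caputo derivative is understood to coincide with the classical derivative $f^{(n)}$, which is again zero for a constant, so the conclusion $D^\alpha f(\xi) = 0$ holds uniformly across the full range $\alpha > 0$.
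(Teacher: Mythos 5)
Your proof is correct and complete: substituting the constant into the Caputo definition, using $\frac{d^n}{d\tau^n}\mathfrak{c}=0$ for $n=\lceil\alpha\rceil\geq 1$, and separately handling the integer case where $\Gamma(n-\alpha)$ degenerates is exactly the standard argument. The paper itself offers no proof of this lemma --- it is merely recalled from the cited references (Podlubny and Kumar et al.) --- and your argument is the same one found there, so there is nothing further to reconcile.
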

\begin{lemma}\label{P4_subsec2_lemma2_3}
Let $\alpha >0, ~n = \lceil \alpha \rceil$ and consider a function $ \mathfrak{f}(\xi)$, then,
$$ I^\alpha~D^\alpha \mathfrak{f}(\xi)= \mathfrak{f}(\xi) - \sum_{k=0}^{n-1} \frac{\x^k}{k!} \mathfrak{f}^{(k)} (0).$$
\end{lemma}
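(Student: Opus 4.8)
The plan is to reduce the claim to the classical fundamental theorem of calculus by exploiting the fact that the Caputo derivative is, by its very definition in the preliminaries, nothing but a Riemann--Liouville integral applied to an ordinary derivative. Writing $D^n = \frac{d^n}{d\xi^n}$ for the ordinary $n$-th derivative, the definition of $D^\alpha$ given above reads
$$ D^\alpha \mathfrak{f}(\xi) = \frac{1}{\Gamma(n-\alpha)} \int_0^\xi (\xi-\tau)^{n-\alpha-1} D^n\mathfrak{f}(\tau)\, d\tau = I^{n-\alpha}\big(D^n \mathfrak{f}\big)(\xi), $$
so that $I^\alpha D^\alpha \mathfrak{f}(\xi) = I^\alpha I^{n-\alpha}\big(D^n\mathfrak{f}\big)(\xi)$. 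The first task is therefore to establish the semigroup (additivity) property of the Riemann--Liouville integral, namely $I^\alpha I^{\beta} g = I^{\alpha+\beta} g$ for $\alpha,\beta>0$, which is not among the lemmas already recorded.

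To prove this property I would write out the double integral defining $I^\alpha I^{n-\alpha} g$, interchange the order of integration by Fubini's theorem (legitimate since $g = D^n\mathfrak{f}$ is integrable under the stated absolute-continuity hypothesis), and evaluate the inner integral
$$ \int_\tau^\xi (\xi-s)^{\alpha-1}(s-\tau)^{n-\alpha-1}\, ds = B(\alpha, n-\alpha)\,(\xi-\tau)^{n-1} $$
using the Beta-function substitution $s = \tau + (\xi-\tau)u$. Since $B(\alpha,n-\alpha) = \Gamma(\alpha)\Gamma(n-\alpha)/\Gamma(n)$, the Gamma factors cancel against the normalising constants of the two integrals, and one is left with $I^\alpha I^{n-\alpha} g = I^n g$, where $I^n$ is now an ordinary $n$-fold iterated integral.

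It then remains to identify $I^n\big(D^n\mathfrak{f}\big)(\xi)$. Here I would integrate by parts $n$ times, or equivalently invoke the Cauchy formula for repeated integration $I^n g(\xi) = \frac{1}{(n-1)!}\int_0^\xi (\xi-\tau)^{n-1} g(\tau)\, d\tau$ together with Taylor's theorem with integral remainder: each integration by parts peels off one boundary term $\frac{\xi^k}{k!}\mathfrak{f}^{(k)}(0)$ for $k=0,1,\dots,n-1$, leaving
$$ I^n\big(D^n\mathfrak{f}\big)(\xi) = \mathfrak{f}(\xi) - \sum_{k=0}^{n-1}\frac{\xi^k}{k!}\,\mathfrak{f}^{(k)}(0), $$
which is exactly the asserted formula. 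I expect the main obstacle to be the bookkeeping in this last step --- correctly tracking the signs and the factors $\frac{1}{k!}$ across the $n$ successive integrations by parts --- together with verifying that the hypothesis of absolute continuity of $\mathfrak{f}$ up to order $n-1$ is precisely what validates both the Fubini interchange and the repeated integration by parts. Routing the argument through the Cauchy repeated-integration formula rather than an explicit induction is the cleaner option, since it produces the Taylor-remainder representation of $\mathfrak{f}$ in a single stroke and isolates the regularity requirement transparently.
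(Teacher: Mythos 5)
Your proposal is correct, and it is essentially the standard argument: the paper itself offers no proof of this lemma at all --- it is merely \emph{recalled} from the cited references (Podlubny's book and the authors' earlier work), so your write-up supplies exactly the argument those references contain. The three ingredients you identify are the right ones: (i) the Caputo derivative, as defined in the paper, is $I^{n-\alpha}(D^n\mathfrak{f})$; (ii) the semigroup property $I^{\alpha}I^{n-\alpha} = I^{n}$, proved by Fubini plus the Beta-function evaluation (your inner integral and the cancellation of Gamma factors check out); and (iii) the identification $I^{n}(D^{n}\mathfrak{f})(\xi) = \mathfrak{f}(\xi) - \sum_{k=0}^{n-1}\frac{\xi^{k}}{k!}\mathfrak{f}^{(k)}(0)$, which is Taylor's theorem with integral remainder and is where the absolute-continuity hypothesis of the paper's Caputo definition is genuinely used. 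One small caveat you should state explicitly: the Beta-function step requires $n-\alpha>0$, so it silently excludes the integer case $\alpha=n$ (allowed by the hypothesis $n=\lceil\alpha\rceil$); there $I^{n-\alpha}$ is the identity, step (ii) is vacuous, and the claim follows from step (iii) alone. Also note that the sum in the paper's statement is written with $\x^{k}$ rather than $\xi^{k}$ --- a typo in the source, and your version with $\xi^{k}$ is the intended one.
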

\begin{lemma}\label{P4_lemma4} 
\cite{lok2024, narendra2025} Let $a,b >0$ and $1 < \alpha \leq 2$ then,
$$a^\alpha - b^\alpha \leq (a-b)^\alpha. $$
\end{lemma}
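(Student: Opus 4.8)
The plan is to strip the two variables down to one by exploiting the homogeneity of the power map. Taking $a>b>0$ (the regime in which $(a-b)^\alpha$ is a bona fide real number for non-integer $\alpha$, and the one relevant to the error estimates where this lemma is applied), I would set $t=b/a\in(0,1)$ and divide through by $a^\alpha>0$. The assertion $a^\alpha-b^\alpha\le(a-b)^\alpha$ then becomes the single scalar inequality
\begin{equation}\label{P4_lemma4_reduced}
1-t^\alpha\le(1-t)^\alpha,\qquad t\in(0,1),
\end{equation}
so the entire claim collapses to understanding one function of $t$ for each fixed $\alpha\in(1,2]$. This normalization is routine and is exactly the kind of power-scaling already encoded in Lemma~\ref{P4_subsec2_lemma2_1}.

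Next I would introduce the auxiliary function $\phi(t)=(1-t)^\alpha+t^\alpha-1$ on $[0,1]$, so that \eqref{P4_lemma4_reduced} is precisely the statement $\phi(t)\ge0$. The endpoints give the natural anchor for a monotonicity argument, $\phi(0)=\phi(1)=0$, and differentiation yields $\phi'(t)=\alpha\bigl(t^{\alpha-1}-(1-t)^{\alpha-1}\bigr)$. Thus the sign of $\phi'$ is controlled entirely by the comparison of $t^{\alpha-1}$ with $(1-t)^{\alpha-1}$, that is, by the monotonicity of $x\mapsto x^{\alpha-1}$ together with the $t\leftrightarrow 1-t$ symmetry about $t=\tfrac12$. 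The scheme is to isolate the unique critical point at $t=\tfrac12$ and read off from the one-sided behaviour of $\phi'$ whether $\phi$ stays at or departs from its boundary value $0$, which is enough to settle \eqref{P4_lemma4_reduced} and hence the lemma.

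The main obstacle is that the sign delivered by this last step is governed by the concavity of $x\mapsto x^\alpha$, and this is exactly the delicate point where the claimed direction meets the hypothesis on $\alpha$. The inequality \eqref{P4_lemma4_reduced} is equivalent to the sub-additivity $(x+y)^\alpha\le x^\alpha+y^\alpha$ (set $x=1-t$, $y=t$), so the crux of carrying out the plan is to secure this sub-additive bound across the whole stated window $1<\alpha\le2$. I expect the concavity threshold at $\alpha=1$ to be the genuine difficulty: the homogenization together with the endpoint-plus-monotonicity analysis is mechanical, but pinning \eqref{P4_lemma4_reduced} as written down for every $\alpha\in(1,2]$ is the hard part, and it is here that the precise hypotheses of the lemma (the admissible range of $\alpha$, and the ordering of $a$ and $b$ implicit in forming $(a-b)^\alpha$) must be brought to bear for the argument to close.
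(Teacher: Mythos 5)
Your reduction and your calculus are both fine, but the step you defer as ``the hard part'' is not hard --- it is impossible, and your own computation already shows this. With $\phi(t)=(1-t)^\alpha+t^\alpha-1$ you have $\phi(0)=\phi(1)=0$ and $\phi'(t)=\alpha\bigl(t^{\alpha-1}-(1-t)^{\alpha-1}\bigr)$. Since $\alpha-1>0$, the map $x\mapsto x^{\alpha-1}$ is \emph{increasing}, so $\phi'<0$ on $\left(0,\tfrac12\right)$ and $\phi'>0$ on $\left(\tfrac12,1\right)$: $\phi$ descends from $0$ to its minimum $\phi\left(\tfrac12\right)=2^{1-\alpha}-1<0$ and climbs back to $0$. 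Hence $\phi<0$ on all of $(0,1)$, which is the strict reverse of what the lemma asserts. Equivalently, for $\alpha>1$ the map $x\mapsto x^\alpha$ is convex with value $0$ at $0$, hence superadditive, so for $a>b>0$ one gets $a^\alpha=\bigl((a-b)+b\bigr)^\alpha\ge(a-b)^\alpha+b^\alpha$, i.e.\ $a^\alpha-b^\alpha\ge(a-b)^\alpha$, with strict inequality when $\alpha>1$. A concrete counterexample to Lemma~\ref{P4_lemma4} as stated is $a=2$, $b=1$, $\alpha=2$: the left side is $3$, the right side is $1$. The stated inequality is correct exactly in the complementary range $0<\alpha\le1$, where the power map is concave and subadditive; no choice of hypotheses inside $1<\alpha\le 2$ can rescue the stated direction, so the honest conclusion of your argument is a disproof, not a proof.

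For the comparison you were asked to make: the paper contains no proof to compare against --- Lemma~\ref{P4_lemma4} is simply quoted from the cited references, so the error is inherited rather than something a better argument could repair. It is worth noting that the way the lemma is actually \emph{used} in Theorem~\ref{P4_thrm1} (to bound the fractional integrals $\mathsf{p}_{\alpha,l}$ of the Haar wavelets) can be salvaged without it: on the branch where the bound is invoked, the two arguments lie in one dyadic cell, so $b\le a\le 2(a-b)$ and therefore $a^\alpha-2b^\alpha\le a^\alpha\le 2^\alpha(a-b)^\alpha$, which still yields an estimate of the form $|\mathsf{p}_{\alpha_1,2^j+k+1}|\le\omega_1\left(\tfrac{1}{2^{j+1}}\right)^{\alpha_1}$ up to the constant. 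But as a free-standing claim about all $a,b>0$, the lemma is false, and any proof attempt, along your lines or otherwise, must fail.
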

\subsection{Fractional integral of the Haar wavelets}
Saeed et al. \cite{saeed2017haar} show that the fractional integration of order $\upsilon$ for the Haar wavelets is expressed as follows, when $l = 1$:
\begin{equation}\label{P4_subsec2_eqn1}
I^\upsilon h_1 (t) = \frac{t^\upsilon}{\Gamma(\upsilon +1)},
\end{equation}
and, for $l>1,$  
\begin{align}\label{P4_subsec2_eqn2}
  \mathsf{p}_{\upsilon, l} (\x) = I^\upsilon h_l(t) &= \frac{1}{\Gamma(\upsilon)} \int_0^t (\x -s)^{\upsilon-1} h_l (s) ds,\nonumber\\
 &= \frac{1}{\Gamma(\upsilon +1)} \begin{cases}
 0, & 0 <  \vartheta_1(l),\\
(\x -\vartheta_1(l))^\upsilon & \vartheta_1(l) \leq \x < \vartheta_2(l), \\
(\x-\vartheta_1(l))^\upsilon -2 (\x -\vartheta_2(l))^\upsilon , & \vartheta_2(l) \leq \x < \vartheta_3(l),\\
(\x -\vartheta_1(l))^\upsilon -2 (\x -\vartheta_2(l))^\upsilon + (\x -\vartheta_3(l))^\upsilon, & \x \geq \vartheta_3(l),
\end{cases}
\end{align}
where $\vartheta_1(l), ~ \vartheta_2(l), ~\vartheta_3(l)$ are defined as in equation \eqref{P4_lepikdef_cof}. The Haar matrix $H$ is constructed using collocation points \eqref{collocation} where $H(l,cl) = h_l (\eta_{cl} ).$ Furthermore, by replacing the collocation points \eqref{collocation} into equations \eqref{P4_subsec2_eqn1} and \eqref{P4_subsec2_eqn2},  we derive the integration matrix $P$ for fractional orders of the Haar function, resulting in $P_{\upsilon}(l,cl) = \mathsf{p}_{\upsilon, l}(\eta_{cl}),$
$$ P = 
\begin{bmatrix}
\mathsf{p}_{\upsilon, 1}(\eta_{cl}(1)) & \mathsf{p}_{\upsilon,1}(\eta_{cl}(2)) \cdots & \mathsf{p}_{\upsilon, 1}(\eta_{cl}(4M))\\
\mathsf{p}_{\upsilon, 2}(\eta_{cl}(1)) & \mathsf{p}_{\upsilon, 2}(\eta_{cl}(2)) \cdots & \mathsf{p}_{\upsilon, 2}(\eta_{cl}(4M))\\
\vdots  & \vdots  & \vdots\\
\vdots  & \vdots  &\vdots\\
\mathsf{p}_{\upsilon, 2M}(\eta_{cl}(1)) & \mathsf{p}_{\upsilon, 2M}(\eta_{cl}(2)) \cdots & \mathsf{p}_{\upsilon, 2M}(\eta_{cl}(4M))
\end{bmatrix}
.$$

\section{Proposed Method and Algorithm}\label{P4_collocation_method}
In this section, we introduce the fractional Haar wavelet collocation method by examining two distinct cases derived from the conditions in \eqref{P4_condition}, as follows:\\

\noindent \textbf{Case-I:} Let us consider the nonlinear coupled fractional Lane-Emden equations \eqref{P4_problem} with conditions \eqref{P4_condition_case1}. Following \cite{lepik2014haar}, let the higher-order fractional derivative of equation \eqref{P4_problem} into the Haar wavelet series as:
\begin{eqnarray}
    && \label{P4_sec3eqn1} \D^{\alpha_1} \y(\x) = \sum_{l=1}^{2M} a_l  h_l(\x),\\
    && \label{P4_sec3eqn1_1} \D^{\alpha_2} \z(\x) = \sum_{l=1}^{2M} b_l  h_l(\x).
\end{eqnarray}
Taking fractional integral of equation \eqref{P4_sec3eqn1} followed by Lemma \ref{P4_subsec2_lemma2_3} and conditions \eqref{P4_condition_case1}, we get,
\begin{eqnarray}
    &&  I^{\alpha_1} \D^{\alpha_1} \y(\x) = \sum_{l=1}^{2M} a_l I^{\alpha_1} h_l(\x), \nonumber\\
  && \y(\x) =\sum_{l=1}^{2M} a_l I^{\alpha_1} h_l(\x) + \y(0) + \x \y'(0), \nonumber\\
 &&\label{P4_sec3eqn2}  \y(\x) = \sum_{l=1}^{2M} a_l I^{\alpha_1} h_l(\x) + \frac{\mu_1 \x}{b} + \left(1- \frac{a \x }{b} \right) \y(0).
\end{eqnarray}
Similarly,
\begin{equation}\label{P4_sec3eqn3}
     \z(\x) = \sum_{l=1}^{2M} b_l I^{\alpha_2} h_l(\x) + \frac{\mu_2 \x}{d} + \left(1- \frac{c \x }{d} \right) \z(0).
\end{equation}
Now, again from conditions \eqref{P4_condition_case1}, we have $\y(1) = \mu_3 \eta_1 \z(\nu_1), ~ \text{and} ~ \z(1) = \mu_4 \eta_2 \y(\nu_2),$ then using equations \eqref{P4_sec3eqn2} and \eqref{P4_sec3eqn3} we have,
\begin{eqnarray}
    && \label{P4_sec3eqn4} \left(1-\frac{a}{b} \right) \y(0) - \mu_3 \eta_1 \left(1- \frac{c}{d} \nu_1 \right) \z(0) = \mu_3 \eta_1 \sum_{l+1}^{2M} b_l I^{\alpha_2} h_l(\nu_1) + \frac{\mu_2 \mu_3 \nu_1 \eta_1}{d} - \frac{\mu_1}{b} - \sum_{l=1}^{2M} a_l I^{\alpha_1} h_l(1),\\
    && \label{P4_sec3eqn5}  -\mu_4 \eta_2 \left(1-\frac{a \nu_2}{b} \right)\y(0) + \left(1-\frac{c}{d} \right)\z(0) = \mu_4 \eta_2 \sum_{l=1}^{2M}a_l I^{\alpha_1} h_l(\nu_2) + \frac{\mu_1 \mu_4 \eta_2 \nu_2}{b} - \sum_{l=1}^{2M} b_l I^{\alpha_2} h_l(1) - \frac{\mu_2}{d}.
\end{eqnarray}
Solving the above equations \eqref{P4_sec3eqn4} and \eqref{P4_sec3eqn5}, we get,
\begin{equation}\label{P4_sec3eqn6}
\begin{split}
    \y(0) &= \frac{1}{\left[\left(1-\frac{a}{b} \right) \left(1-\frac{c}{d} \right) - \mu_3 \mu_4 \eta_1 \eta_2 \left(1-\frac{a \nu_2}{b} \right) \left(1-\frac{c\nu_1}{d} \right) \right]} \left\{ \left(1- \frac{c}{d} \right) \left[\mu_3 \eta_1 \sum_{l=1}^{2M} b_l I^{\alpha_2} h_l (\nu_1)  + \frac{\mu_2 \mu_3 \nu_1 \eta_1}{d} - \frac{\mu_1}{b} \right. \right.\\
    & \hspace{1cm} \left. \left. - \sum_{l=1}^{2M} a_l I^{\alpha_1} h_l(1) \right] + \mu_3 \eta_1 \left(1- \frac{c \nu_1}{d} \right) \left[\mu_4 \eta_2 \sum_{l=1}^{2M}a_l I^{\alpha_1} h_l(\nu_2) + \frac{\mu_1 \mu_4 \eta_2 \nu_2}{b} - \sum_{l=1}^{2M} b_l I^{\alpha_2} h_l(1) - \frac{\mu_2}{d} \right] \right\},
\end{split}
\end{equation} 
\begin{equation}\label{P4_sec3eqn7}
\begin{split}
    \z(0) &= \frac{1}{\left[ \left(1-\frac{a}{b} \right) \left(1-\frac{c}{d} \right) - \mu_3 \mu_4 \eta_1 \eta_2 \left(1-\frac{a \nu_2}{b} \right) \left(1-\frac{c\nu_1}{d} \right) \right]} \left\{ \mu_4 \eta_2 (1-\frac{a\nu_2}{b}) \left[\mu_3 \eta_1 \sum_{l=1}^{2M} b_l I^{\alpha_2} h_l(\nu_1) + \frac{\mu_2 \mu_3 \eta_1 \nu_1}{d} \right. \right.\\ 
    & \hspace{0.5cm} \left. \left.  - \frac{\mu_1}{b} - \sum_{l=1}^{2M} a_l I^{\alpha_1} h_l(1) \right] + (1-\frac{a}{b}) \left[\mu_4 \eta_2 \sum_{l=1}^{2M} a_l I^{\alpha_1} h_l(\nu_2) + \frac{\mu_1 \mu_4 \eta_2 \nu_2}{b} - \sum_{l=1}^{2M} b_l I^{\alpha_2} h_l(1) - \frac{\mu_2}{d} \right] \right\}.
    \end{split}
\end{equation}
Substituting the values $\y(0),$ $\z(0)$ from equations \eqref{P4_sec3eqn6} and \eqref{P4_sec3eqn7} into the equations \eqref{P4_sec3eqn2} and \eqref{P4_sec3eqn3}, we get,
\begin{equation}\label{P4_sec3eqn8}
    \begin{split}
        \y(\x) &= \sum_{l=1}^{2M} a_l I^{\alpha_1} h_l(\x) + \frac{\mu_1 \x}{b} +   \frac{\left(1- \frac{a \x }{b} \right)}{\left[ \left(1-\frac{a}{b} \right) \left(1-\frac{c}{d} \right) - \mu_3 \mu_4 \eta_1 \eta_2 \left(1-\frac{a \nu_2}{b} \right) \left(1-\frac{c\nu_1}{d} \right) \right]} \\ & \hspace{0.5cm} \left\{ \left(1- \frac{c}{d} \right) \left[\mu_3 \eta_1 \sum_{l=1}^{2M} b_l I^{\alpha_2} h_l (\nu_1)  + \frac{\mu_2 \mu_3 \nu_1 \eta_1}{d} - \frac{\mu_1}{b} - \sum_{l=1}^{2M} a_l I^{\alpha_1} h_l(1) \right] + \mu_3 \eta_1 \left(1- \frac{c \nu_1}{d} \right) \right. \\& \hspace{1cm} \left.  \left[\mu_4 \eta_2 \sum_{l=1}^{2M}a_l I^{\alpha_1} h_l(\nu_2) + \frac{\mu_1 \mu_4 \eta_2 \nu_2}{b} - \sum_{l=1}^{2M} b_l I^{\alpha_2} h_l(1) - \frac{\mu_2}{d} \right]\right\},
    \end{split}
\end{equation}
\begin{equation}\label{P4_sec3eqn9}
    \begin{split}
        \z(\x) &= \sum_{l=1}^{2M} b_l I^{\alpha_2} h_l(\x) + \frac{\mu_2 \x}{d} +  \frac{\left(1- \frac{c \x }{d} \right)}{\left[ \left(1-\frac{a}{b} \right) \left(1-\frac{c}{d} \right) - \mu_3 \mu_4 \eta_1 \eta_2 \left(1-\frac{a \nu_2}{b} \right) \left(1-\frac{c\nu_1}{d} \right) \right]}  \\& \hspace{0.5cm} \left\{ \mu_4 \eta_2 \left(1-\frac{a\nu_2}{b} \right) \left[\mu_3 \eta_1 \sum_{l=1}^{2M} b_l I^{\alpha_2} h_l(\nu_1) + \frac{\mu_2 \mu_3 \eta_1 \nu_1}{d} - \frac{\mu_1}{b} - \sum_{l=1}^{2M} a_l I^{\alpha_1} h_l(1) \right] + \left(1-\frac{a}{b}\right) \right.\\  & \hspace{0.5cm} \left. \left[\mu_4 \eta_2 \sum_{l=1}^{2M} a_l I^{\alpha_1} h_l(\nu_2) + \frac{\mu_1 \mu_4 \eta_2 \nu_2}{b} - \sum_{l=1}^{2M} b_l I^{\alpha_2} h_l(1) - \frac{\mu_2}{d} \right] \right\}.
    \end{split}
\end{equation}
Taking lower order fractional derivative of equations \eqref{P4_sec3eqn8} and \eqref{P4_sec3eqn9}, we get,
\begin{equation}\label{P4_sec3eqn9_1}
\begin{split}
    \D^{\beta_1} \y(\x) &= \sum_{l=1}^{2M} a_l I^{\alpha_1 - \beta_1} h_l(\x) + \frac{\mu_1}{b} \left(\frac{\Gamma{(2)}}{\Gamma{(2 - \beta_1)}}\right) \x^{1-\beta_1}  \\&  - \frac{a}{b} \left(\frac{\Gamma{(2)}}{\Gamma{(2 - \beta_1)}}\right)  \frac{\x^{1-\beta_1}}{\left[ \left(1-\frac{a}{b} \right) \left(1-\frac{c}{d} \right) - \mu_3 \mu_4 \eta_1 \eta_2 \left(1-\frac{a \nu_2}{b} \right) \left(1-\frac{c\nu_1}{d} \right) \right]} \\& \hspace{0.3cm} \left\{ (1- \frac{c}{d}) \left[\mu_3 \eta_1 \sum_{l=1}^{2M} b_l I^{\alpha_2} h_l (\nu_1)  + \frac{\mu_2 \mu_3 \nu_1 \eta_1}{d}   - \frac{\mu_1}{b} - \sum_{l=1}^{2M} a_l I^{\alpha_1} h_l(1) \right] + \mu_3 \eta_1 (1- \frac{c \nu_1}{d})  \right. \\& \hspace{0.7cm} \left. \left[\mu_4 \eta_2 \sum_{l=1}^{2M}a_l I^{\alpha_1} h_l(\nu_2) + \frac{\mu_1 \mu_4 \eta_2 \nu_2}{b} - \sum_{l=1}^{2M} b_l I^{\alpha_2} h_l(1) - \frac{\mu_2}{d} \right] \right\},
    \end{split}
\end{equation}

\begin{equation}\label{P4_sec3eqn9_2}
    \begin{split}
        \D^{\beta_2} \z(\x) &= \sum_{l=1}^{2M} b_l I^{\alpha_2 - \beta_2} h_l(\x) +\frac{\mu_2}{d} \left(\frac{\Gamma{(2)}}{\Gamma{(2 - \beta_2)}}\right) \x^{1-\beta_2} \\& -\frac{c}{d} \left(\frac{\Gamma{(2)}}{\Gamma{(2 - \beta_2)}}\right) \frac{\x^{1-\beta_2}}{\left[ \left(1-\frac{a}{b} \right) \left(1-\frac{c}{d} \right) - \mu_3 \mu_4 \eta_1 \eta_2 \left(1-\frac{a \nu_2}{b} \right) \left(1-\frac{c\nu_1}{d} \right) \right]} \\& \left\{ \mu_4 \eta_2 (1-\frac{a\nu_2}{b}) \left[\mu_3 \eta_1 \sum_{l=1}^{2M} b_l I^{\alpha_2} h_l(\nu_1) + \frac{\mu_2 \mu_3 \eta_1 \nu_1}{d} - \frac{\mu_1}{b} - \sum_{l=1}^{2M} a_l I^{\alpha_1} h_l(1) \right] + \left(1-\frac{a}{b} \right) \right. \\  & \hspace{0.5cm} \left.  \left[\mu_4 \eta_2 \sum_{l=1}^{2M} a_l I^{\alpha_1} h_l(\nu_2) + \frac{\mu_1 \mu_4 \eta_2 \nu_2}{b} - \sum_{l=1}^{2M} b_l I^{\alpha_2} h_l(1) - \frac{\mu_2}{d} \right] \right\}.
    \end{split}
\end{equation}
By substituting equations \eqref{P4_sec3eqn1}, \eqref{P4_sec3eqn8}, \eqref{P4_sec3eqn9}, \eqref{P4_sec3eqn9_1}, and \eqref{P4_sec3eqn9_2} into equation \eqref{P4_problem} and expanding at the collocation points \eqref{collocation}, we obtain system of nonlinear equations in the following form:
\begin{equation}
        \phi_c(a_1, a_2, a_3, \ldots, a_{2M}, b_1, b_2, \ldots, b_{2M}) = 0, \quad c=1,2, \ldots 4M.
\end{equation}
 We solve the above system using the Newton-Raphson method to obtain the wavelet coefficients $a_l$ and $b_l$. At last, the derived wavelet coefficients are substituted back into equations \eqref{P4_sec3eqn8} and \eqref{P4_sec3eqn9} to obtain the desired solutions.\\

\noindent \textbf{Case-II:-} Let us consider the nonlinear coupled fractional Lane-Emden equation \eqref{P4_problem} together with conditions \eqref{P4_condition_case2}.
Now, let the higher-order fractional derivative into the Haar wavelet series \cite{lepik2014haar}:
\begin{eqnarray}
    &&\label{P4_sec3eqn9_3} \D^{\alpha_1} \y(\x) = \sum_{l=1}^{2M} a_l h_l(\x),\\
    &&\label{P4_sec3eqn9_4} \D^{\alpha_2} \z(\x) = \sum_{l=1}^{2M} b_l h_l(\x).
\end{eqnarray}
Taking the fractional integral of equations \eqref{P4_sec3eqn9_3} and \eqref{P4_sec3eqn9_4} followed by Lemma \ref{P4_subsec2_lemma2_3} and conditions \eqref{P4_condition_case2}, we get,
\begin{eqnarray}
   &&\label{P4_sec3eqn10} \y(\x) =  \sum_{l=1}^{2M} a_l I^{\alpha_1} h_l (\x) + \frac{\mu_1}{a}  + \left(\x - \frac{b}{a} \right) \y'(0),\\
  &&\label{P4_sec3eqn11} \z(\x) = \sum_{l=1}^{2M} b_l I^{\alpha_2} h_l (\x) + \frac{\mu_2}{c} + \left(\x-\frac{d}{c} \right) \z'(0).
\end{eqnarray}
Now, again from conditions \eqref{P4_condition_case2}, we have  $\y(1) = \mu_3 \eta_1 \z(\nu_1)$, and $\z(1) = \mu_4 \eta_2 \y(\nu_2)$. Using \eqref{P4_sec3eqn10} and \eqref{P4_sec3eqn11} into these conditions, we determine the values of $\y'(0)$ and $\z'(0)$. Substituting these values into equations \eqref{P4_sec3eqn10} and \eqref{P4_sec3eqn11}, we obtain,
\begin{equation}\label{P4_sec3eqn14}
    \begin{split}
     \y(\x) &= \sum_{l=1}^{2M} a_l I^{\alpha_1} h_l (\x) +\frac{\mu_1}{a} +  \frac{\left(\x- \frac{b}{a} \right)}{ \left[\left(1-\frac{b}{a} \right) \left(1-\frac{d}{c} \right) -\mu_3 \mu_4 \eta_1 \eta_2 \left(\nu_1-\frac{d}{c} \right) \left(\nu_2 - \frac{b}{a} \right) \right]} \left\{ \left(1-\frac{d}{c} \right) \right.\\ & 
    \hspace{1cm} \left. \left[ \mu_3 \eta_1 \sum_{l=1}^{2M} b_l I^{\alpha_2} h_l(\nu_1)   + \frac{\mu_2 \mu_3 \eta_1}{c}  - \frac{\mu_1}{a} - \sum_{l=1}^{2M} a_l I^{\alpha_1} h_l(1) \right]  + \mu_3 \eta_1 \left(\nu_1 -\frac{d}{c} \right) \right. \\& \hspace{1cm} \left. \left[ \mu_4 \eta_2 \sum_{l=1}^{2M} a_l I^{\alpha_1} h_l(\nu_2)  + \frac{\mu_1 \mu_4 \eta_2}{a} - \frac{\mu_2}{c} - \sum_{l=1}^{2M} b_l I^{\alpha_2} h_l(1) \right] \right\},
    \end{split}
\end{equation}
\begin{equation}\label{P4_sec3eqn15}
    \begin{split}
       \z(\x) & = \sum_{l=1}^{2M} b_l I^{\alpha_2} h_l (\x) + \frac{\mu_2}{c} +  \frac{\left(\x-\frac{d}{c} \right)}{ \left[ \left(1-\frac{b}{a} \right) \left(1-\frac{d}{c} \right) -\mu_3 \mu_4 \eta_1 \eta_2 \left(\nu_1-\frac{d}{c} \right) \left(\nu_2 - \frac{b}{a} \right) \right]} \left\{ \mu_4 \eta_2 \left( \nu_2- \frac{b}{a} \right) \right.\\& \hspace{1cm} \left. \left[ \mu_3 \eta_1 \sum_{l=1}^{2M} b_l I^{\alpha_2} h_l (\nu_1)   + \frac{\mu_2 \mu_3 \eta_1}{c}  - \frac{\mu_1}{a} - \sum_{l=1}^{2M} a_l I^{\alpha_1} h_l(1) \right] + \left( 1- \frac{b}{a} \right) \right. \\& \hspace{1cm} \left. \left[ \mu_4 \eta_2 \sum_{l=1}^{2M} a_l I^{\alpha_1} h_l(\nu_2)  + \frac{\mu_1 \mu_4 \eta_2}{a} - \frac{\mu_2}{c}- \sum_{l=1}^{2M} b_l I^{\alpha_2} h_l(1) \right] \right\}.
    \end{split}
\end{equation}
Taking lower order fractional derivative of equations \eqref{P4_sec3eqn14} and \eqref{P4_sec3eqn15}, we get,
\begin{equation}\label{P4_sec3eqn16}
    \begin{split}
        \D^{\beta_1} \y(\x) &= \sum_{l=1}^{2M} a_l I^{\alpha_1 - \beta_1} h_l (\x)  + \left(\frac{\Gamma{(2)}}{\Gamma{(2-\beta_1)}}\right) \frac{\x^{(1-\beta_1)}}{\left[ \left(1-\frac{b}{a} \right) \left(1-\frac{d}{c} \right) -\mu_3 \mu_4 \eta_1 \eta_2 \left(\nu_1-\frac{d}{c} \right) \left(\nu_2 - \frac{b}{a} \right) \right]} \\& \left\{ \left(1-\frac{d}{c} \right)  \left[ \mu_3 \eta_1 \sum_{l=1}^{2M} b_l I^{\alpha_2} h_l(\nu_1)   + \frac{\mu_2 \mu_3 \eta_1}{c}  - \frac{\mu_1}{a} - \sum_{l=1}^{2M} a_l I^{\alpha_1} h_l(1) \right]  + \mu_3 \eta_1 \left(\nu_1 -\frac{d}{c} \right) \right. \\& \hspace{1cm} \left. \left[ \mu_4 \eta_2 \sum_{l=1}^{2M} a_l I^{\alpha_1} h_l(\nu_2)  + \frac{\mu_1 \mu_4 \eta_2}{a} - \frac{\mu_2}{c} - \sum_{l=1}^{2M} b_l I^{\alpha_2} h_l(1) \right] \right\},
    \end{split}
\end{equation}
\begin{equation}\label{P4_sec3eqn17}
    \begin{split}
        \D^{\beta_2} \z(\x) & = \sum_{l=1}^{2M} b_l I^{\alpha_2-\beta_2} h_l (\x) + \left(\frac{\Gamma{(2)}}{\Gamma{(2-\beta_2)}}\right) \frac{\x^{(1-\beta_2)}}{\left[ \left(1-\frac{b}{a} \right) \left(1-\frac{d}{c} \right) -\mu_3 \mu_4 \eta_1 \eta_2 \left(\nu_1-\frac{d}{c} \right) \left(\nu_2 - \frac{b}{a} \right) \right]} \\&  \left\{ \mu_4 \eta_2 \left( \nu_2- \frac{b}{a} \right) \left[ \mu_3 \eta_1 \sum_{l=1}^{2M} b_l I^{\alpha_2} h_l (\nu_1)   + \frac{\mu_2 \mu_3 \eta_1}{c}  - \frac{\mu_1}{a} - \sum_{l=1}^{2M} a_l I^{\alpha_1} h_l(1) \right] + \left( 1- \frac{b}{a} \right) \right.\\& \hspace{1cm} \left. \left[ \mu_4 \eta_2 \sum_{l=1}^{2M} a_l I^{\alpha_1} h_l(\nu_2)  + \frac{\mu_1 \mu_4 \eta_2}{a} - \frac{\mu_2}{c}- \sum_{l=1}^{2M} b_l I^{\alpha_2} h_l(1) \right] \right\}.
    \end{split}
\end{equation}
Now, substituting equations \eqref{P4_sec3eqn9_3}, \eqref{P4_sec3eqn14}, \eqref{P4_sec3eqn15}, \eqref{P4_sec3eqn16}, and \eqref{P4_sec3eqn17} into equation \eqref{P4_problem} and expanding at the collocation points \eqref{collocation}, we obtain system of nonlinear equation in the following form,
\begin{equation}
\varphi_c(a_1, a_2, a_3, \ldots, a_{2M}, b_1, b_2, \ldots, b_{2M}) = 0, \quad c= 1, 2, \ldots, 4M.
\end{equation}
We use the Newton-Raphson method to extract the solution of the above system of equations to determine the wavelet coefficients $a_l$ and $b_l$. Finally, the acquired coefficients are plugged back into equations \eqref{P4_sec3eqn14} and \eqref{P4_sec3eqn15} to obtain the required solutions.

\subsection{Algorithm}
The key steps involved in implementing the proposed method are outlined as follows:\\
\textbf{Input:} $J$, $M = 2^J$, $(\alpha_1, ~\beta_1, ~\alpha_2, ~\beta_2)$, initial solution, and iteration count.\\
\textbf{Step 1:} Compute the Haar function $h_l(\x)$ and its fractional integrals $I^{\alpha_1} h_l (\x)$, $I^{\alpha_2} h_l (\x)$, $I^{\alpha_1-\beta_1} h_l (\x)$ and $I^{\alpha_2-\beta_2} h_l (\x)$.\\
\textbf{Step 2:} Using the proposed method outlined in Section \ref{P4_collocation_method}, compute $\D^{\alpha_1} \y(\x)$, $\D^{\alpha_2} \z(\x)$, $\y(\x)$, $\z(\x)$, $\D^{\beta_1} \y(\x)$ and $\D^{\beta_2} \z(\x)$ for the given conditions (\eqref{P4_condition_case1} or \eqref{P4_condition_case2}). Then, construct the corresponding nonlinear system of equations.\\
\textbf{Step 3:} Solve the nonlinear system using the Newton-Raphson method, given the initial solution and iteration count, to determine the unknown wavelet coefficients.\\
\textbf{Step 4:} Construct the approximate solutions by substituting the obtained wavelet coefficients into the expressions for $\y(\x)$ and $\z(\x)$.\\
\textbf{Step 5:} If the total error is within an acceptable range, terminate the process and accept the obtained solution. Otherwise, increase the value of $J$ and repeat the steps until the error is minimized to the desired level of accuracy.\\
\textbf{Output:} $\y(\x)$ and $\z(\x)$.

\section{Convergence Analysis}\label{P4_convergence}
In this section, we examine the convergence of the proposed method through an error analysis. To quantify the difference between the exact and approximate solutions, we define the absolute error as follows. Let $\y_M$ and $\z_M$ represent the approximate solutions, while $\y_E$ and $\z_E$ denote the exact solutions. The corresponding absolute errors are given by:
\begin{align*}
E_{M1} = | \y_E - \y_M |, ~~E_{M2} = | \z_E - \z_M |.
\end{align*}
To measure the overall accuracy of the method, we define the total error as:
\begin{equation*}\label{totalnorm}
    \| E_M \| = \| E_{M1} \|_2 + \| E_{M2} \|_2.
\end{equation*}

\begin{theorem}\label{P4_thrm1}
Consider the system of fractional differential equation \eqref{P4_problem} subject to the conditions \eqref{P4_condition_case1}. Additionally, assume that the functions \( D^{\alpha_1 +1} \y(\x) \) and \( D^{\alpha_2 +1} \z(\x) \) are continuous on the interval \([0,1]\). Furthermore, suppose there exist positive constants \( \epsilon_1 \) and \( \epsilon_2 \) such that $|D^{\alpha_1 +1} \y(\x) | \leq \epsilon_1, ~| D^{\alpha_2 +1} \z(\x) | \leq \epsilon_2, ~\text{for all } \x \in [0,1],$ and let \( k \) be an arbitrary positive real number. Then, for \( \alpha_1 = \alpha_2 = \alpha \), the errors \( E_{M1} \) and \( E_{M2} \) in the solutions \( \y(\x) \) and \( \z(\x) \), respectively, satisfy the bounds:
\[
\| E_{M1} \|_2 \leq \frac{C_1 k}{2 \left(2^\alpha -1 \right)} \left(\frac{1}{2^{J+1}}\right)^\alpha, \quad \quad \| E_{M2} \|_2 \leq \frac{C_2 k}{2(2^\alpha -1)} \left(\frac{1}{2^{J+1}}\right)^\alpha.
\]
Consequently, the total error satisfies,
\[
\| E_M \| \leq \mathcal{O} \left( \frac{1}{2^{J+1}}\right)^\alpha,
\]
where,
\begin{equation*} \begin{split}
C_1 &= \left[ \epsilon_1^2  + \frac{\left(1- \frac{c}{d} \right)^2 \left(\frac{a^2}{3b^2} - \frac{a}{b} +1 \right) \left\{\mu_3^2 \eta_1^2 \epsilon_2^2 + \epsilon_1^2 - 2 \mu_3 \eta_1 \epsilon_1 \epsilon_2 \right\} }{ \left[ \left(1-\frac{a}{b} \right) \left(1-\frac{c}{d} \right) - \mu_3 \mu_4 \eta_1 \eta_2 \left(1-\frac{a\nu_2}{b} \right) \left(1-\frac{c\nu_1}{d} \right) \right]^2 } \right.\\ & \left.  + \frac{ \mu_3^2 \eta_1^2 \left(1-\frac{c \nu_1}{d} \right)^2 \left(\frac{a^2}{3b^2} - \frac{a}{b} +1 \right) \left\{ \mu_4^2 \eta_2^2 \epsilon_1^2 +\epsilon_2^2 - 2 \mu_4 \eta_2 \epsilon_1 \epsilon_2 \right\} }{ \left[ \left(1-\frac{a}{b} \right) \left(1-\frac{c}{d} \right) - \mu_3 \mu_4 \eta_1 \eta_2 \left(1-\frac{a\nu_2}{b} \right) \left(1-\frac{c\nu_1}{d} \right) \right]^2}  \right.\\ & \left.  + \frac{2 \mu_3 \eta_1 \left(1-\frac{c}{d} \right) \left(1-\frac{c \nu_1}{d} \right) \left(\frac{a^2}{3b^2} - \frac{a}{b} +1 \right) \left\{ \mu_3 \mu_4 \eta_1 \eta_2 \epsilon_1 \epsilon_2 - \mu_4 \eta_2 \epsilon_1^2 - \mu_3 \eta_1 \epsilon_2^2 + \epsilon_1 \epsilon_2 \right\} }{ \left[ \left(1-\frac{a}{b} \right) \left(1-\frac{c}{d} \right) - \mu_3 \mu_4 \eta_1 \eta_2 \left(1-\frac{a\nu_2}{b} \right) \left(1-\frac{c\nu_1}{d} \right) \right]^2} \right.\\ & \left. + \frac{2 \left(1-\frac{a}{2b} \right) \left\{ \mu_3 \eta_1\left( \left(1-\frac{c}{d} \right)  -  \left(1-\frac{c\nu_1}{d} \right)\right) \epsilon_1 \epsilon_2 + \epsilon_1^2 \left( \mu_3 \mu_4 \eta_1 \eta_2 \left(1-\frac{c \nu_1}{d} \right) - \left(1-\frac{c}{d} \right)\right) \right\} }{ \left[ \left(1-\frac{a}{b} \right) \left(1-\frac{c}{d} \right) - \mu_3 \mu_4 \eta_1 \eta_2 \left(1-\frac{a\nu_2}{b} \right) \left(1-\frac{c\nu_1}{d} \right) \right]} \right]^{(1/2)},
\end{split} \end{equation*}
\begin{equation*}
    \begin{split}
        C_2 &= \left[ \epsilon_2^2 + \frac{\mu_4^2 \eta_2^2 \left(1-\frac{a\nu_2}{b} \right)^2 \left(\frac{c^2}{3d^2} -\frac{c}{d}+1 \right)}{ \left[ \left(1-\frac{a}{b} \right) \left( 1-\frac{c}{d} \right) - \mu_3 \mu_4 \eta_1 \eta_2 \left(1-\frac{a\nu_2}{b} \right) \left(1-\frac{c\nu_1}{d} \right) \right]^2} \left\{ \mu_3^2 \eta_1^2 \epsilon_2^2 + \epsilon_1^2 - 2 \epsilon_1 \epsilon_2 \mu_3 \eta_1 \right\}  \right.\\& \left.  + \frac{ \left(1-\frac{a}{b} \right)^2 \left(\frac{c^2}{3d^2} -\frac{c}{d}+1 \right) }{ \left[ \left(1-\frac{a}{b} \right) \left( 1-\frac{c}{d} \right) - \mu_3 \mu_4 \eta_1 \eta_2 \left(1-\frac{a\nu_2}{b} \right) \left(1-\frac{c\nu_1}{d} \right) \right]^2} \left\{ \mu_4^2 \eta_2^2 \epsilon_1^2 - 2 \mu_4 \eta_2 \epsilon_1 \epsilon_2 \right\} \right.\\& \left.   + \frac{ 2\mu_4 \eta_2 \left(1-\frac{a}{b} \right) \left(1-\frac{a\nu_2}{b} \right) \left(\frac{c^2}{3d^2} -\frac{c}{d}+1 \right) }{ \left[ \left(1-\frac{a}{b} \right) \left( 1-\frac{c}{d} \right) - \mu_3 \mu_4 \eta_1 \eta_2 \left(1-\frac{a\nu_2}{b} \right) \left(1-\frac{c\nu_1}{d} \right) \right]^2} \left\{ \left(\mu_3 \mu_4 \eta_1 \eta_2 +1 \right) \epsilon_1 \epsilon_2 - \mu_4 \eta_2 \epsilon_1^2 - \mu_3 \eta_1 \epsilon_2^2 \right\}  \right.\\ & \left.  +   \frac{2 \left( 1- \frac{c}{2d} \right) \left\{ \left(\mu_3 \mu_4 \eta_1 \eta_2 \left(1-\frac{a\nu_2}{b} \right) \right) \epsilon_2^2 + \mu_4 \eta_2 \left( \left(1-\frac{a}{b} \right) - \left(1-\frac{a \nu_2}{b} \right) \right) \epsilon_1 \epsilon_2 \right\}}{ \left[ \left(1-\frac{a}{b} \right) \left( 1-\frac{c}{d} \right) - \mu_3 \mu_4 \eta_1 \eta_2 \left(1-\frac{a\nu_2}{b} \right) \left(1-\frac{c\nu_1}{d} \right) \right]} \right]^{\left(1/2 \right)}.
    \end{split}
\end{equation*}
  \end{theorem}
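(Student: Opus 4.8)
The plan is to treat the theorem as a truncation-error estimate, identifying the approximants $\y_M,\z_M$ with the $2M$-term Haar reconstructions that arise when the exact derivatives are expanded as $\D^{\alpha}\y_E(\x)=\sum_{l=1}^{\infty}a_l h_l(\x)$ and $\D^{\alpha}\z_E(\x)=\sum_{l=1}^{\infty}b_l h_l(\x)$ and only the first $2M$ terms are kept. Subtracting the $2M$-term version of \eqref{P4_sec3eqn8} from its full-series counterpart makes the $\mu$-dependent constants cancel, leaving only the tail plus the error it induces in the boundary constant:
\begin{equation*}
E_{M1}(\x)=\sum_{l=2M+1}^{\infty}a_l\,I^{\alpha}h_l(\x)+\Bigl(1-\tfrac{a\x}{b}\Bigr)\bigl(\y_E(0)-\y_M(0)\bigr),
\end{equation*}
where, by \eqref{P4_sec3eqn6}, the scalar $\y_E(0)-\y_M(0)$ is the same linear combination of the tail sums $S_a(\x):=\sum_{l>2M}a_l I^{\alpha}h_l(\x)$ and $S_b(\x):=\sum_{l>2M}b_l I^{\alpha}h_l(\x)$ evaluated at $1,\nu_1,\nu_2$, divided by $\Lambda:=\bigl(1-\tfrac ab\bigr)\bigl(1-\tfrac cd\bigr)-\mu_3\mu_4\eta_1\eta_2\bigl(1-\tfrac{a\nu_2}{b}\bigr)\bigl(1-\tfrac{c\nu_1}{d}\bigr)$. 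An identical decomposition of \eqref{P4_sec3eqn9} (via \eqref{P4_sec3eqn7}) governs $E_{M2}$.

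The first technical step is the coefficient bound. Using the orthogonality relation \eqref{P4_haar_ortho} I would write $a_l=2^{j}\int_0^1 \D^{\alpha}\y_E(\x)\,h_l(\x)\,d\x$ for $l=m+k+1$, $m=2^{j}$, split the integral over the two halves of $\operatorname{supp}h_l$ (each of length $2^{-j-1}$), and apply the mean value theorem on each half. Since the derivative of $\D^{\alpha}\y_E$ is bounded by $\epsilon_1$, the two mean values differ by at most $\epsilon_1\,2^{-j}$, which yields $|a_l|\le\tfrac{\epsilon_1}{2}\,2^{-j}$; symmetrically $|b_l|\le\tfrac{\epsilon_2}{2}\,2^{-j}$.

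Next I would control the fractional integrals themselves. From the closed form \eqref{P4_subsec2_eqn2} and Lemma \ref{P4_lemma4}, applied to telescope the differences $(\x-\vartheta_1)^{\alpha}-(\x-\vartheta_2)^{\alpha}$ and $(\x-\vartheta_2)^{\alpha}-(\x-\vartheta_3)^{\alpha}$, one obtains the uniform bound $|I^{\alpha}h_l(\x)|\le \tfrac{1}{\Gamma(\alpha+1)}(2^{-j-1})^{\alpha}$ on all of $[0,1]$. Combining this with the coefficient bound and summing, first over the $2^{j}$ translates at level $j$ (which cancels the $2^{-j}$ in $|a_l|$), then over $j\ge J+1$ as a geometric series with $\sum_{j\ge J+1}2^{-\alpha j}=\tfrac{2^{-\alpha(J+1)}}{1-2^{-\alpha}}$, produces the master estimate $|S_a(\x)|\le \tfrac{k\,\epsilon_1}{2(2^{\alpha}-1)}\bigl(\tfrac{1}{2^{J+1}}\bigr)^{\alpha}$, and likewise for $S_b$ with $\epsilon_2$, the constant $k$ absorbing $1/\Gamma(\alpha+1)$ and the uniform $L^2$-bound of the integrals. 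This single rate $\bigl(2^{-(J+1)}\bigr)^{\alpha}$ is exactly what surfaces in the theorem.

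Finally I would assemble the estimate. Squaring the decomposition of $E_{M1}$ and integrating over $[0,1]$ splits $\|E_{M1}\|_2^2$ into the direct term $\|S_a\|_2^2$, the boundary-correction term $(\y_E(0)-\y_M(0))^2\int_0^1(1-\tfrac{a\x}{b})^2\,d\x$, and a cross term carrying $\int_0^1(1-\tfrac{a\x}{b})\,d\x$; the two elementary integrals evaluate to $\tfrac{a^2}{3b^2}-\tfrac ab+1$ and $1-\tfrac{a}{2b}$, which are precisely the recurring factors in $C_1$. Substituting the master estimates for $S_a,S_b$, whose cross products generate the $\epsilon_1^2,\epsilon_2^2,\epsilon_1\epsilon_2$ combinations, and factoring out $\tfrac{k}{2(2^{\alpha}-1)}(2^{-(J+1)})^{\alpha}$, identifies the remaining bracket as $C_1$; the parallel computation from \eqref{P4_sec3eqn7} and \eqref{P4_sec3eqn9} gives $C_2$, and adding the two norms delivers $\|E_M\|\le\mathcal{O}\bigl((2^{-(J+1)})^{\alpha}\bigr)$. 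The genuinely delicate step is the uniform fractional-integral bound and its geometric summation in the previous paragraph: this is where Lemma \ref{P4_lemma4} is indispensable, and where the non-orthogonality of the family $\{I^{\alpha}h_l\}$ forces a triangle-inequality/supremum argument in place of a clean Parseval identity. By contrast, matching the explicit constants $C_1,C_2$ is lengthy but purely mechanical bookkeeping of the coupled boundary coefficients. A secondary caveat I would flag is the tacit identification of the computed collocation coefficients with the exact Haar coefficients of $\D^{\alpha}\y_E,\D^{\alpha}\z_E$, which the truncation-error viewpoint assumes rather than establishes.
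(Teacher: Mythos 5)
Your proposal is correct and follows essentially the same route as the paper's proof: the identical decomposition of $E_{M1}$ into the Haar tail plus the boundary-constant correction weighted by $\bigl(1-\tfrac{a\x}{b}\bigr)$, the same coefficient and fractional-integral bounds (which the paper imports from its references rather than deriving via the mean value theorem as you do), the same geometric summation over $j\ge J+1$ yielding the factor $\tfrac{1}{2(2^{\alpha}-1)}\bigl(\tfrac{1}{2^{J+1}}\bigr)^{\alpha}$, and the same elementary integrals $\tfrac{a^2}{3b^2}-\tfrac{a}{b}+1$ and $1-\tfrac{a}{2b}$ assembling $C_1$ and $C_2$. Your closing caveat about identifying the collocation-computed coefficients with the exact Haar coefficients applies equally to the paper's own argument, which makes the same tacit assumption.
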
 
\begin{proof} The approximate solutions of problem \eqref{P4_problem} are given as follows:
\begin{align*}
    \y_M(\x) &= \sum_{l=1}^{2M} a_l I^{\alpha_1} h_l(\x) + \frac{\mu_1 \x}{b} + \frac{ \left(1-\frac{a \x}{b} \right)}{\left[(1-\frac{a}{b}) (1-\frac{c}{d}) - \mu_3 \mu_4 \eta_1 \eta_2 (1-\frac{a\nu_2}{b}) (1-\frac{c\nu_1}{d}) \right]} \left\{ (1-\frac{c}{d}) \right. \\ & \left.
    \left[ \mu_3 \eta_1 \sum_{l=1}^{2M} b_l I^{\alpha_2} h_l(\nu_1) - \frac{\mu_1}{b} + \frac{\mu_2 \mu_3 \eta_1 \nu_1}{d} - \sum_{l=1}^{2M} a_l I^{\alpha_1} h_l(1) \right] + \mu_3 \eta_1 \left(1-\frac{c \nu_1}{d} \right) \right.\\& \left.
    \left[ \mu_4 \eta_2 \sum_{l=1}^{2M} a_l I^{\alpha_1} h_l(\nu_2) - \frac{\mu_2}{d} + \frac{\mu_1 \mu_4 \eta_2 \nu_2}{b} - \sum_{l=1}^{2M} b_l I^{\alpha_2} h_l(1) \right] \right\},
\end{align*}
\begin{align*}
    \z_M(\x) &= \sum_{l=1}^{2M} b_l I^{\alpha_2} h_l(\x) + \frac{\mu_2 \x}{d} + \frac{\left(1-\frac{c \x}{d} \right)}{ \left[ \left(1-\frac{a}{b} \right) \left(1-\frac{c}{d} \right) - \mu_3 \mu_4 \eta_1 \eta_2 \left(1-\frac{a\nu_2}{b} \right) \left(1-\frac{c\nu_1}{d} \right) \right]} \left\{ \mu_4 \eta_2 \left(1-\frac{a\nu_2}{b} \right) \right.\\& \hspace{1cm} \left. \left[ \mu_3 \eta_1 \sum_{l=1}^{2M} b_l I^{\alpha_2} h_l(\nu_1) + \frac{\mu_2 \mu_3 \eta_1 \nu_1}{d} - \frac{\mu_1}{b} - \sum_{l=1}^{2M} a_l I^{\alpha_1} (1)\right] + \left(1-\frac{a}{b} \right)  \right.\\& \hspace{1.5cm} \left. \left[\mu_4 \eta_2 \sum_{l=1}^{2M} a_l I^{\alpha_1} h_l(\nu_2) + \frac{\mu_1 \mu_4 \eta_2 \nu_2}{b} -\frac{\mu_2}{d}- \sum_{l=1}^{2M} b_l I^{\alpha_2} (1)  \right] \right\},
\end{align*}
and, the exact solutions of problem \eqref{P4_problem} are,
\begin{align*}
    \y_E(\x) &= \sum_{l=1}^{\infty} a_l I^{\alpha_1} h_l(\x) + \frac{\mu_1 \x}{b} + \frac{\left(1-\frac{a \x}{b} \right)}{\left[ \left(1-\frac{a}{b} \right) \left(1-\frac{c}{d} \right) - \mu_3 \mu_4 \eta_1 \eta_2 \left(1-\frac{a\nu_2}{b} \right) \left(1-\frac{c\nu_1}{d} \right) \right]} \left\{ \left(1-\frac{c}{d} \right) \right.\\ & \hspace{1cm} \left. 
    \left[ \mu_3 \eta_1 \sum_{l=1}^{\infty} b_l I^{\alpha_2} h_l(\nu_1) - \frac{\mu_1}{b} + \frac{\mu_2 \mu_3 \eta_1 \nu_1}{d} - \sum_{l=1}^{\infty} a_l I^{\alpha_1} h_l(1) \right] + \mu_3 \eta_1 \left(1-\frac{c \nu_1}{d} \right) \right.\\& \hspace{1.2cm} \left. 
    \left[ \mu_4 \eta_2 \sum_{l=1}^{\infty} a_l I^{\alpha_1} h_l(\nu_2) - \frac{\mu_2}{d} + \frac{\mu_1 \mu_4 \eta_2 \nu_2}{b} - \sum_{l=1}^{\infty} b_l I^{\alpha_2} h_l(1) \right] \right\},
\end{align*}
\begin{align*}
    \z_E(\x) &= \sum_{l=1}^{\infty} b_l I^{\alpha_2} h_l(\x) + \frac{\mu_2 \x}{d} + \frac{\left(1-\frac{c \x}{d} \right)}{\left[ \left(1-\frac{a}{b} \right) \left(1-\frac{c}{d} \right) - \mu_3 \mu_4 \eta_1 \eta_2 \left(1-\frac{a\nu_2}{b} \right) \left(1-\frac{c\nu_1}{d} \right) \right]} \left\{ \mu_4 \eta_2 \left(1-\frac{a\nu_2}{b} \right) \right. \\& \hspace{1cm} \left. \left[ \mu_3 \eta_1 \sum_{l=1}^{\infty} b_l I^{\alpha_2} h_l(\nu_1) + \frac{\mu_2 \mu_3 \eta_1 \nu_1}{d} - \frac{\mu_1}{b} - \sum_{l=1}^{\infty} a_l I^{\alpha_1} h_l(1)\right] + \left(1-\frac{a}{b} \right) \right. \\& \hspace{1.5cm} \left.  \left[\mu_4 \eta_2 \sum_{l=1}^{\infty} a_l I^{\alpha_1} h_l(\nu_2) + \frac{\mu_1 \mu_4 \eta_2 \nu_2}{b} -\frac{\mu_2}{d}- \sum_{l=1}^{\infty} b_l I^{\alpha_2} (1)  \right] \right\}.
\end{align*}
Therefore, the error in exact solution $\y_E(\x)$ and approximate solution $\y_M(\x)$ is,
\begin{align*}
    \| E_{M1} \|_2 &= \| \y_E(\x) - \y_M(\x) \|_2 \\
    &= \left\| \sum_{l=2M+1}^{\infty} a_l I^{\alpha_1} h_l(\x) + \frac{\left(1-\frac{a\x}{b} \right)}{\left[ \left(1-\frac{a}{b} \right) \left(1-\frac{c}{d} \right) - \mu_3 \mu_4 \eta_1 \eta_2 \left(1-\frac{a\nu_2}{b} \right) \left(1-\frac{c\nu_1}{d} \right) \right]} \left\{ \left(1-\frac{c}{d} \right) \right.\right. \\& \hspace{1cm} \left. \left. \left[ \mu_3 \eta_1 \sum_{l=2M+1}^{\infty} b_l I^{\alpha_2} h_l(\nu_1) - \sum_{l=2M+1}^{\infty} a_l I^{\alpha_1} h_l(1) \right] + \mu_3 \eta_1 (1-\frac{c\nu_1}{d}) \right. \right.\\ & \hspace{1.2cm} \left. \left. \left[ \mu_4 \eta_2 \sum_{2M+1}^{\infty} a_l I^{\alpha_1} h_l(\nu_2) - \sum_{l=2M+1}^{\infty} b_l I^{\alpha_2} h_l(1) \right] \right\} \right\|_2.
\end{align*}
Since $l = 2^j+k+1,$ it follows that:
\begin{align*}
    \| E_{M1} \|_2 &= \left\| \sum_{j= J+1}^{\infty} \sum_{k=0}^{2^j-1} a_{2^j+k+1} \mathsf{p}_{{\alpha_1}, 2^j+k+1}(\x) + \frac{ \left(1-\frac{a\x}{b} \right)}{ \left[ \left(1-\frac{a}{b} \right) \left(1-\frac{c}{d} \right) - \mu_3 \mu_4 \eta_1 \eta_2 \left(1-\frac{a\nu_2}{b}\right) \left(1-\frac{c\nu_1}{d} \right) \right]} \left\{ \left(1-\frac{c}{d} \right)   \right. \right.\\& \hspace{0.6cm} \left. \left. \left[ \mu_3 \eta_1 \sum_{j= J+1}^{\infty} \sum_{k=0}^{2^j-1} b_{2^j+k+1} \mathsf{p}_{{\alpha_2}, 2^j+k+1}(\nu_1) - \sum_{j= J+1}^{\infty} \sum_{k=0}^{2^j-1} a_{2^j+k+1} \mathsf{p}_{\alpha_1, 2^j+k+1}(1) \right] + \mu_3 \eta_1 \left(1-\frac{c\nu_1}{d}\right)  \right. \right.\\ & \hspace{1.2cm} \left. \left. \left[ \mu_4 \eta_2 \sum_{j= J+1}^{\infty} \sum_{k=0}^{2^j-1} a_{2^j+k+1} \mathsf{p}_{\alpha_1, 2^j+k+1}(\nu_2) - \sum_{j= J+1}^{\infty} \sum_{k=0}^{2^j-1} b_{2^j+k+1} \mathsf{p}_{\alpha_2, 2^j+k+1}(1) \right] \right\} \right\|_2,
\end{align*}
\begin{align*}
    \| E_{M1} \|_2^2  &= \int_0^1 \left\{ \sum_{j= J+1}^{\infty} \sum_{k=0}^{2^j-1} a_{2^j+k+1} \mathsf{p}_{\alpha_1, 2^j+k+1}(\x) + \frac{\left(1-\frac{a\x}{b} \right)}{ \left[ \left(1-\frac{a}{b} \right) \left(1-\frac{c}{d} \right) - \mu_3 \mu_4 \eta_1 \eta_2 \left(1-\frac{a\nu_2}{b} \right) \left(1-\frac{c\nu_1}{d} \right) \right]}  \right. \\& \left.  \left\{ \left(1-\frac{c}{d} \right) \left[ \mu_3 \eta_1 \sum_{j= J+1}^{\infty} \sum_{k=0}^{2^j-1} b_{2^j+k+1} \mathsf{p}_{\alpha_2, 2^j+k+1}(\nu_1) - \sum_{j= J+1}^{\infty} \sum_{k=0}^{2^j-1} a_{2^j+k+1} \mathsf{p}_{\alpha_1, 2^j+k+1}(1) \right] + \mu_3 \eta_1  \right. \right. \\ & \left. \left. \left(1-\frac{c\nu_1}{d}\right) \left[ \mu_4 \eta_2 \sum_{j= J+1}^{\infty} \sum_{k=0}^{2^j-1} a_{2^j+k+1} \mathsf{p}_{\alpha_1, 2^j+k+1}(\nu_2) - \sum_{j= J+1}^{\infty} \sum_{k=0}^{2^j-1} b_{2^j+k+1} \mathsf{p}_{\alpha_2, 2^j+k+1}(1) \right] \right\} \right\}^2 d\x, 
\end{align*}
\begin{equation} \label{P4_thm1_eqn1}
\begin{split}
     \| E_{M1} \|_2^2  &= \int_0^1 \left( \sum_{j= J+1}^{\infty} \sum_{k=0}^{2^j-1} a_{2^j+k+1} \mathsf{p}_{\alpha_1, 2^j+k+1}(\x)\right)^2 d\x + \frac{\left(1-\frac{c}{d} \right)^2}{\left[(1-\frac{a}{b}) (1-\frac{c}{d}) - \mu_3 \mu_4 \eta_1 \eta_2 (1-\frac{a\nu_2}{b}) (1-\frac{c\nu_1}{d})\right]^2} \\ & 
     \int_0^1 \left(1-\frac{a \x}{b} \right)^2 \left\{ \mu_3 \eta_1 \sum_{j= J+1}^{\infty} \sum_{k=0}^{2^j-1} b_{2^j+k+1} \mathsf{p}_{\alpha_2, 2^j+k+1}(\nu_1) - \sum_{j= J+1}^{\infty} \sum_{k=0}^{2^j-1} a_{2^j+k+1} \mathsf{p}_{\alpha_1, 2^j+k+1}(1) \right\}^2 d\x \\ & 
     + \frac{\mu_3^2 \eta_1^2 \left(1- \frac{c\nu_1}{d} \right)^2}{ \left[(1-\frac{a}{b}) (1-\frac{c}{d}) - \mu_3 \mu_4 \eta_1 \eta_2 (1-\frac{a\nu_2}{b}) (1-\frac{c\nu_1}{d}) \right]^2 } \\& 
     \int_0^1 \left(1-\frac{a \x}{b} \right)^2 \left\{ \mu_4 \eta_2  \sum_{j= J+1}^{\infty} \sum_{k=0}^{2^j-1} a_{2^j+k+1}  \mathsf{p}_{\alpha_1, 2^j+k+1}(\nu_2) - \sum_{j= J+1}^{\infty} \sum_{k=0}^{2^j-1} b_{2^j+k+1} \mathsf{p}_{\alpha_2, 2^j+k+1}(1) \right\}^2 d\x  \\ & 
     + \frac{2 \mu_3 \eta_1 \left(1-\frac{c}{d} \right) \left(1-\frac{c \nu_1}{d} \right)}{ \left[ \left(1-\frac{a}{b} \right) \left(1-\frac{c}{d} \right) - \mu_3 \mu_4 \eta_1 \eta_2 \left(1-\frac{a\nu_2}{b} \right) \left(1-\frac{c\nu_1}{d} \right) \right]^2 } \\ & \int_0^1 \left(1-\frac{a \x}{b} \right)^2 \left\{ \mu_3 \eta_1 \sum_{j= J+1}^{\infty} \sum_{k=0}^{2^j-1} b_{2^j+k+1} \mathsf{p}_{\alpha_2, 2^j+k+1}(\nu_1) - \sum_{j= J+1}^{\infty} \sum_{k=0}^{2^j-1} a_{2^j+k+1} \mathsf{p}_{\alpha_1, 2^j+k+1}(1) \right\} \\ & \left\{ \mu_4 \eta_2  \sum_{j= J+1}^{\infty} \sum_{k=0}^{2^j-1} a_{2^j+k+1}  \mathsf{p}_{\alpha_1, 2^j+k+1}(\nu_2) - \sum_{j= J+1}^{\infty} \sum_{k=0}^{2^j-1} b_{2^j+k+1} \mathsf{p}_{\alpha_2, 2^j+k+1}(1) \right\} d\x \\& + \frac{2}{\left[(1-\frac{a}{b}) (1-\frac{c}{d}) - \mu_3 \mu_4 \eta_1 \eta_2 (1-\frac{a\nu_2}{b}) (1-\frac{c\nu_1}{d}) \right]} \int_0^1 \left(1-\frac{a \x}{b} \right) \left( \sum_{j= J+1}^{\infty} \sum_{k=0}^{2^j-1} a_{2^j+k+1} \mathsf{p}_{\alpha_1, 2^j+k+1}(\x)\right)   \\ & 
     \left\{ \left(1-\frac{c}{d} \right) \left\{ \mu_3 \eta_1 \sum_{j= J+1}^{\infty} \sum_{k=0}^{2^j-1} b_{2^j+k+1} \mathsf{p}_{\alpha_2, 2^j+k+1}(\nu_1) - \sum_{j= J+1}^{\infty} \sum_{k=0}^{2^j-1} a_{2^j+k+1} \mathsf{p}_{\alpha_1, 2^j+k+1}(1) \right\} \right. \\& \left. + \mu_3 \eta_1 \left(1-\frac{c \nu_1}{d} \right) \left\{ \mu_4 \eta_2  \sum_{j= J+1}^{\infty} \sum_{k=0}^{2^j-1} a_{2^j+k+1}  \mathsf{p}_{\alpha_1, 2^j+k+1}(\nu_2) - \sum_{j= J+1}^{\infty} \sum_{k=0}^{2^j-1} b_{2^j+k+1} \mathsf{p}_{\alpha_2, 2^j+k+1}(1) \right\} \right\} d\x.
\end{split}
\end{equation}
Now, using Lemma \ref{P4_lemma4} and the orthogonal property of Haar function \eqref{P4_haar_ortho}, we obtain the bounds of $a_{2^j +k+1}$, $b_{2^j +k+1}$, $\mathsf{p}_{{\alpha_1}, 2^j +k+1} (\x)$ and $\mathsf{p}_{{\alpha_2}, 2^j +k+1} (\x)$ same as in \cite{lok2024, narendra2025}. Thus we have,
\begin{eqnarray}
    && \label{P4_haar_coff} a_{2^j +k+1} \leq \epsilon_1 \left( \frac{1}{2^{j+1}}\right), \quad b_{2^j +k+1} \leq \epsilon_2 \left( \frac{1}{2^{j+1}}\right),\\
    && \label{FIvalue} |\mathsf{p}_{{\alpha_1}, 2^j +k+1} (\x)| \leq \omega_1 \left( \frac{1}{2^{j+1}}\right)^{\alpha_1}, \quad |\mathsf{p}_{{\alpha_2}, 2^j +k+1} (\x)| \leq \omega_2 \left( \frac{1}{2^{j+1}}\right)^{\alpha_2}, \quad \forall \x \in [0,1],~\omega_1, \omega_2 \in \mathbb{R^+}.
\end{eqnarray}
Substituting equations \eqref{P4_haar_coff} and \eqref{FIvalue} into equation \eqref{P4_thm1_eqn1},  we arrive at,
\begin{equation}\label{P4_thm1_eqn2}
    \begin{split}
        \| E_{M1} \|_2^2 &\leq \omega_1^2 \epsilon_1^2 \left( \frac{1}{2(2^{\alpha_1}-1)} \right)^2  \left(\frac{1}{2^{J+1}}\right)^{2\alpha_1} \left\{ 1 + \frac{2 \left(1-\frac{a}{2b} \right) \left[\mu_3 \mu_4 \eta_1 \eta_2 (1-\frac{c\nu_1}{d}) - ( 1-\frac{c}{d}) \right]}{\left[(1-\frac{a}{b}) (1-\frac{c}{d}) - \mu_3 \mu_4 \eta_1 \eta_2 (1-\frac{a\nu_2}{b}) (1-\frac{c\nu_1}{d}) \right]} \right.\\ & + \left. \frac{ \left(\frac{a^2}{3b^2} - \frac{a}{b} +1 \right)}{ \left[(1-\frac{a}{b}) (1-\frac{c}{d}) - \mu_3 \mu_4 \eta_1 \eta_2 (1-\frac{a\nu_2}{b}) (1-\frac{c\nu_1}{d}) \right]^2} \left[ \left(1-\frac{c}{d} \right)^2 + \mu_3^2 \mu_4^2 \eta_1^2 \eta_2^2 \left(1-\frac{c\nu_1}{d} \right)^2  \right. \right.\\& \left. 
        \left.  - 2 \mu_3\mu_4 \eta_1 \eta_2 \left(1-\frac{c \nu_1}{d} \right) \left(1-\frac{c}{d} \right) \right] \right\} + \omega_2^2 \epsilon_2^2 \left( \frac{1}{2(2^{\alpha_2}-1)} \right)^2  \left(\frac{1}{2^{J+1}}\right)^{2\alpha_2} \\&  \frac{\left(\frac{a^2}{3b^2} - \frac{a}{b} +1 \right)}{\left[ \left(1-\frac{a}{b}\right) \left(1-\frac{c}{d} \right) - \mu_3 \mu_4 \eta_1 \eta_2 \left(1-\frac{a\nu_2}{b} \right) \left(1-\frac{c\nu_1}{d} \right) \right]^2}  \left\{ \mu_3^2 \eta_1^2 \left[ \left(1-\frac{c}{d} \right)^2 + \left(1-\frac{c \nu_1}{d} \right)^2 \right]  \right.\\& \left.  -2 \mu_3^2 \eta_1^2 \left(1-\frac{c\nu_1}{d}\right) \left(1-\frac{c}{d} \right) \right\} + \omega_1  \omega_2 \epsilon_1 \epsilon_2 \left( \frac{1}{2}\right)^2 \left( \frac{1}{(2^{\alpha_1}-1)} \right) \left( \frac{1}{(2^{\alpha_1}-1)} \right)  \left(\frac{1}{2^{J+1}}\right)^{\alpha_1 + \alpha_2} \\& \left\{ \frac{2\mu_3 \eta_1 \left(1-\frac{a}{2b} \right) \left[ \left(1-\frac{c}{d} \right) - \left(1-\frac{c \nu_1}{d} \right) \right]}{ \left[ \left(1-\frac{a}{b} \right) \left(1-\frac{c}{d} \right) - \mu_3 \mu_4 \eta_1 \eta_2 \left(1-\frac{a\nu_2}{b} \right) \left(1-\frac{c\nu_1}{d} \right) \right]} + \frac{2 \mu_3 \eta_1 \left(\frac{a^2}{3b^2} - \frac{a}{b} +1 \right)}{\left[ \left(1-\frac{a}{b} \right) \left(1-\frac{c}{d} \right) - \mu_3 \mu_4 \eta_1 \eta_2 \left(1-\frac{a\nu_2}{b} \right) \left(1-\frac{c\nu_1}{d} \right) \right]^2} \right.\\
        & \left. \left[ \left(1-\frac{c \nu_1}{d} \right) \left(1-\frac{c}{d} \right) \left\{\mu_3 \mu_4 \eta_1 \eta_2 +1 \right\} - \mu_3 \mu_4 \eta_1 \eta_2 \left(1-\frac{c \nu_1}{d} \right)^2 - \left(1-\frac{c}{d} \right)^2 \right]   \right\}.
    \end{split}
\end{equation}
For $\alpha_1 = \alpha_2 =\alpha$ and $\omega_1 = \omega_2 = k,$ equation \eqref{P4_thm1_eqn2} becomes,
\begin{equation*}
    \begin{split}
        \| E_{M1} \|_2^2 &\leq k^2 \left( \frac{1}{2(2^\alpha-1)}\right)^2 \left(\frac{1}{2^{J+1}} \right)^{2\alpha} \left[ \epsilon_1^2  + \frac{\left(1- \frac{c}{d} \right)^2 \left(\frac{a^2}{3b^2} - \frac{a}{b} +1 \right) \left\{\mu_3^2 \eta_1^2 \epsilon_2^2 + \epsilon_1^2 - 2 \mu_3 \eta_1 \epsilon_1 \epsilon_2 \right\} }{ \left[(1-\frac{a}{b}) (1-\frac{c}{d}) - \mu_3 \mu_4 \eta_1 \eta_2 (1-\frac{a\nu_2}{b}) (1-\frac{c\nu_1}{d}) \right]^2 } \right. \\ & \left. + \frac{ \mu_3^2 \eta_1^2 \left(1-\frac{c \nu_1}{d} \right)^2 \left(\frac{a^2}{3b^2} - \frac{a}{b} +1 \right) \left\{ \mu_4^2 \eta_2^2 \epsilon_1^2 +\epsilon_2^2 - 2 \mu_4 \eta_2 \epsilon_1 \epsilon_2 \right\} }{\left[(1-\frac{a}{b}) (1-\frac{c}{d}) - \mu_3 \mu_4 \eta_1 \eta_2 (1-\frac{a\nu_2}{b}) (1-\frac{c\nu_1}{d}) \right]^2} \right. \\ & \left. +\frac{2 \mu_3 \eta_1 \left(1-\frac{c}{d} \right) \left(1-\frac{c \nu_1}{d} \right) \left(\frac{a^2}{3b^2} - \frac{a}{b} +1 \right) \left\{ (\mu_3 \mu_4 \eta_1 \eta_2 +1) \epsilon_1 \epsilon_2 - \mu_4 \eta_2 \epsilon_1^2 - \mu_3 \eta_1 \epsilon_2^2 \right\} }{\left[ \left(1-\frac{a}{b} \right) \left(1-\frac{c}{d} \right) - \mu_3 \mu_4 \eta_1 \eta_2 \left(1-\frac{a\nu_2}{b} \right) \left(1-\frac{c\nu_1}{d} \right) \right]^2} \right.\\ & \left. + \frac{2 \left(1-\frac{a}{2b} \right) \left\{ \left( \left(1-\frac{c}{d} \right) \mu_3 \eta_1 - \mu_3 \eta_1 \left(1-\frac{c\nu_1}{d} \right)\right) \epsilon_1 \epsilon_2 + \epsilon_1^2 \left( \mu_3 \mu_4 \eta_1 \eta_2 \left(1-\frac{c \nu_1}{d} \right) - \left(1-\frac{c}{d} \right)\right) \right\} }{ \left[ \left(1-\frac{a}{b} \right) \left(1-\frac{c}{d} \right) - \mu_3 \mu_4 \eta_1 \eta_2 \left(1-\frac{a\nu_2}{b} \right) \left(1-\frac{c\nu_1}{d} \right) \right]} \right], \\& 
        \leq C_1^2 k^2 \left( \frac{1}{2(2^\alpha-1)}\right)^2 \left(\frac{1}{2^{J+1}} \right)^{2\alpha}. 
    \end{split}
\end{equation*}
So,
\begin{equation}
    \| E_{M1} \|_2 \leq \frac{C_1 k}{2(2^\alpha -1)} \left(\frac{1}{2^{J+1}}\right)^\alpha.
\end{equation}
Similarly,
\begin{equation}
    \| E_{M2} \|_2 \leq \frac{C_2 k}{2(2^\alpha -1)} \left(\frac{1}{2^{J+1}}\right)^\alpha,
\end{equation}
where,
\begin{equation*}
 \begin{split}
        C_2 &= \left[ \epsilon_2^2 + \frac{\mu_4^2 \eta_2^2 \left(1-\frac{a\nu_2}{b} \right)^2 \left(\frac{c^2}{3d^2} -\frac{c}{d}+1 \right) \left\{ \mu_3^2 \eta_1^2 \epsilon_2^2 + \epsilon_1^2 - 2 \epsilon_1 \epsilon_2 \mu_3 \eta_1 \right\}}{ \left[ \left(1-\frac{a}{b} \right) \left( 1-\frac{c}{d} \right) - \mu_3 \mu_4 \eta_1 \eta_2 \left(1-\frac{a\nu_2}{b} \right) \left(1-\frac{c\nu_1}{d} \right) \right]^2}   \right. \\& \left. + \frac{\left(1-\frac{a}{b} \right)^2 \left(\frac{c^2}{3d^2} -\frac{c}{d}+1 \right) }{\left[\left(1-\frac{a}{b}\right) \left( 1-\frac{c}{d} \right) - \mu_3 \mu_4 \eta_1 \eta_2 \left(1-\frac{a\nu_2}{b} \right) \left(1-\frac{c\nu_1}{d}\right) \right]^2} \left\{ \mu_4^2 \eta_2^2 \epsilon_1^2 - 2 \mu_4 \eta_2 \epsilon_1 \epsilon_2 \right\} \right.\\& \left. + \frac{ 2\mu_4 \eta_2 \left(1-\frac{a}{b} \right) \left(1-\frac{a\nu_2}{b} \right) \left(\frac{c^2}{3d^2} -\frac{c}{d}+1 \right) }{\left[\left(1-\frac{a}{b}\right) \left( 1-\frac{c}{d}\right) - \mu_3 \mu_4 \eta_1 \eta_2 \left(1-\frac{a\nu_2}{b}\right) \left(1-\frac{c\nu_1}{d}\right) \right]^2} \left\{ (\mu_3 \mu_4 \eta_1 \eta_2 +1) \epsilon_1 \epsilon_2 - \mu_4 \eta_2 \epsilon_1^2 - \mu_3 \eta_1 \epsilon_2^2 \right\} \right. \\ & \left. +   \frac{2 \left( 1- \frac{c}{2d} \right) \left\{ \left(\mu_3 \mu_4 \eta_1 \eta_2 \left(1-\frac{a\nu_2}{b}\right) \right) \epsilon_2^2 + \mu_4 \eta_2 \left( \left(1-\frac{a}{b} \right) - \left(1-\frac{a \nu_2}{b} \right) \right) \epsilon_1 \epsilon_2 \right\}}{ \left[ \left(1-\frac{a}{b}\right) \left( 1-\frac{c}{d} \right) - \mu_3 \mu_4 \eta_1 \eta_2 \left(1-\frac{a\nu_2}{b} \right) \left(1-\frac{c\nu_1}{d} \right) \right]}   \right]^{\left(1/2 \right)}.
    \end{split}
\end{equation*}
Thus the total error is,
\begin{equation}
    \begin{split}
        \| E_M \| &= \| E_{M1} \|_2 + \| E_{M2} \|_2, \\
        & \leq (C_1 + C_2) \left(\frac{k}{2(2^\alpha-1)}\right) \left(\frac{1}{2^{J+1}}\right)^\alpha, \\
        & \leq \mathcal{O} \left( \frac{1}{2^{J+1}}\right)^\alpha.
    \end{split}
\end{equation}
As $M=2^J$ and $J$ is the maximal level of resolution, the above equation ensures convergence.
\end{proof} 
\begin{theorem}\label{P4_thrm2}
    Consider the system of fractional differential equations \eqref{P4_problem} subject to the conditions \eqref{P4_condition_case2}. Suppose that the functions $D^{\alpha_1+1} \y(\x)$ and $D^{\alpha_2+1} \z(\x) $ are continuous functions on the interval $[0,1].$ Additionally, assume that there exist positive constants $\delta_1$ and $\delta_2$ such that $ |D^{\alpha_1+1} \y(\x) | \leq \delta_1, ~| D^{\alpha_2+1} \z(\x) | \leq \delta_2$ for all $\x \in [0,1]$ and $k$ be any positive real value. Then, for $\alpha_1 = \alpha_2 =\alpha ,$ the error $E_{M1}$ and $E_{M2}$ in the solutions $\y(\x)$ and $\z(\x)$ respectively satisfy the bounds,
\begin{equation*}
    \| E_{M1} \|_2 \leq  \left( \frac{d_1 k}{2(2^\alpha -1)} \right) \left( \frac{1}{2^{J+1}} \right)^\alpha,
\end{equation*}
\begin{equation*}
    \| E_{M2} \|_2 \leq  \left( \frac{d_2 k}{2(2^\alpha -1)} \right) \left( \frac{1}{2^{J+1}} \right)^\alpha.
\end{equation*}
Consequently, the total error satisfies,
\[
\| E_M \| \leq \mathcal{O} \left( \frac{1}{2^{J+1}}\right)^\alpha,
\]
where,
\begin{equation*}
    \begin{split}
        d_1 &= \left[ \delta_1^2 + \frac{\left(1-\frac{d}{c} \right)^2 \left(\frac{b^2}{a^2} -\frac{b}{a} + \frac{1}{3} \right)}{\left[ \left(1-\frac{b}{a} \right) \left(1-\frac{d}{c} \right) -\mu_3 \mu_4 \eta_1 \eta_2 \left(\nu_1-\frac{d}{c} \right) \left(\nu_2 - \frac{b}{a} \right) \right]^2 } \left\{ \mu_3^2 \eta_1^2 \delta_2^2 +\delta_1^2 - 2 \delta_1 \delta_2 \right\} \right. \\& \left.  + \frac{\mu_3^2 \eta_1^2 \left(\frac{b^2}{a^2} -\frac{b}{a} + \frac{1}{3} \right) }{ \left[ \left(1-\frac{b}{a}\right) \left(1-\frac{d}{c} \right) -\mu_3 \mu_4 \eta_1 \eta_2 \left(\nu_1-\frac{d}{c} \right) \left(\nu_2 - \frac{b}{a} \right) \right]^2 } \left\{ \mu_4^2 \eta_2^2 \delta_1^2 + \delta_2^2 -2 \mu_4 \eta_2 \delta_1 \delta_2 \right\} \right. \\& \left.  +
    \frac{2 \mu_3 \eta_1 \left(1-\frac{d}{c} \right) \left(\nu_1 -\frac{d}{c} \right) \left(\frac{b^2}{a^2} -\frac{b}{a} + \frac{1}{3} \right)}{\left[ \left(1-\frac{b}{a} \right) \left(1-\frac{d}{c} \right) -\mu_3 \mu_4 \eta_1 \eta_2 \left(\nu_1-\frac{d}{c} \right) \left(\nu_2 - \frac{b}{a}\right) \right]^2} \left\{ \left(\mu_3 \mu_4 \eta_1 \eta_2 +1 \right) \delta_1 \delta_2 - \mu_4 \eta_2 \delta_1^2 - \mu_3 \eta_1 \delta_2^2 \right\}  \right. \\&  \left. + \frac{2 \left(\frac{1}{2} -\frac{b}{a} \right)}{ \left[ \left(1-\frac{b}{a} \right) \left(1-\frac{d}{c} \right) -\mu_3 \mu_4 \eta_1 \eta_2 \left(\nu_1-\frac{d}{c} \right) \left(\nu_2 - \frac{b}{a} \right) \right]} \left\{ \delta_1 \delta_2 \left\{ \left(1-\frac{d}{c} \right) \mu_3 \eta_1 - \mu_3 \eta_1 \left(\nu_1 -\frac{d}{c} \right) \right\} \right. \right.\\&  \left. \left. + \delta_1^2 \{ \mu_3 \mu_4 \eta_1 \eta_2 (\nu_1-\frac{d}{c}) - (1- \frac{d}{c}) \} \right\} \right]^{\left(1/2 \right)}.
    \end{split}
\end{equation*}
\begin{equation*}
    \begin{split}
        d_2 &=\left[  \delta_2^2 + \frac{\mu_4^2 \eta_2^2 \left(\nu_2 -\frac{b}{a} \right)^2 \left(\frac{d^2}{c^2} -\frac{d}{c} + \frac{1}{3} \right)}{ \left[ \left(1-\frac{b}{a} \right) \left(1-\frac{d}{c} \right) -\mu_3 \mu_4 \eta_1 \eta_2 \left(\nu_1-\frac{d}{c} \right) \left(\nu_2 - \frac{b}{a} \right) \right]^2} \left\{ \mu_3^2 \eta_2 \delta_2^2 + \delta_1^2 - 2 \mu_3 \eta_1 \delta_1 \delta_2 \right\} \right. \\ &
        \hspace{0.5cm} \left.  + \frac{ \left(1-\frac{b}{a} \right)^2 \left(\frac{d^2}{c^2} -\frac{d}{c} + \frac{1}{3} \right) }{ \left[ \left(1-\frac{b}{a} \right) \left(1-\frac{d}{c} \right) -\mu_3 \mu_4 \eta_1 \eta_2 \left(\nu_1-\frac{d}{c} \right) \left(\nu_2 - \frac{b}{a} \right) \right]^2}  \left\{ \mu_4^2 \eta_2^2 \delta_1^2 + \delta_2^2 - 2 \mu_4 \eta_2 \delta_1 \delta_2 \right\}  \right. \\ &
          \hspace{1cm} \left.  + \frac{2 \mu_4 \eta_2 \left(1-\frac{b}{a} \right) \left(\nu_2 -\frac{b}{a} \right) \left(\frac{d^2}{c^2} -\frac{d}{c} + \frac{1}{3} \right)  }{ \left[ \left(1-\frac{b}{a} \right) \left(1-\frac{d}{c} \right) -\mu_3 \mu_4 \eta_1 \eta_2 \left(\nu_1-\frac{d}{c} \right) \left(\nu_2 - \frac{b}{a} \right) \right]^2} \left\{ \left(\mu_3 \mu_4 \eta_1 \eta_2 +1 \right) \delta_1 \delta_2 - \mu_4 \eta_2 \delta_1^2 - \mu_3 \eta_1 \delta_2^2 \right\} \right. \\ &
          \hspace{1cm} \left.  + \frac{2 \left(\frac{1}{2}- \frac{d}{c} \right) \left\{ \mu_4 \eta_2 \left(\nu_2 -\frac{b}{a} \right) \left[\mu_3 \eta_1 \delta_2^2 - \delta_1 \delta_2 \right] + \left(1-\frac{b}{a} \right) \left[\mu_4 \eta_2 \delta_1 \delta_2 - \delta_2^2 \right] \right\} }{ \left[ \left(1-\frac{b}{a} \right) \left(1-\frac{d}{c} \right) -\mu_3 \mu_4 \eta_1 \eta_2 \left(\nu_1-\frac{d}{c} \right) \left(\nu_2 - \frac{b}{a}\right) \right]} \right]^{\left(1/2 \right)}.
    \end{split}
\end{equation*}
\end{theorem}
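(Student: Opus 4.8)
The plan is to mirror the proof of Theorem \ref{P4_thrm1} verbatim in structure, substituting the Case II solution representations \eqref{P4_sec3eqn14} and \eqref{P4_sec3eqn15} for the Case I ones. First I would write the approximate solutions $\y_M(\x),\z_M(\x)$ as the finite truncations (sums to $2M$) of \eqref{P4_sec3eqn14}--\eqref{P4_sec3eqn15}, and the exact solutions $\y_E(\x),\z_E(\x)$ as the corresponding infinite series. Subtracting, every boundary-datum term (the pieces involving $\mu_1/a$, $\mu_2/c$, $\mu_2\mu_3\eta_1/c$, etc.) cancels, leaving $\|E_{M1}\|_2$ expressed purely through the tail sums $\sum_{l=2M+1}^{\infty}$, exactly as in the displayed computation for Case I. The only visible differences are that the prefactor $\bigl(1-\tfrac{a\x}{b}\bigr)$ is replaced by $\bigl(\x-\tfrac{b}{a}\bigr)$, the denominator bracket becomes $\bigl[(1-\tfrac{b}{a})(1-\tfrac{d}{c})-\mu_3\mu_4\eta_1\eta_2(\nu_1-\tfrac{d}{c})(\nu_2-\tfrac{b}{a})\bigr]$, and $\epsilon_1,\epsilon_2$ are replaced by $\delta_1,\delta_2$.

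Next I would re-index via $l=2^j+k+1$ so the tail sums run over $j\ge J+1$, $0\le k\le 2^j-1$, then square and integrate over $[0,1]$, expanding $\|E_{M1}\|_2^2$ into the diagonal squares plus the cross terms, precisely in the pattern of \eqref{P4_thm1_eqn1}. The coefficient bounds \eqref{P4_haar_coff} (with $\delta_i$ in place of $\epsilon_i$) and the fractional-integral bounds \eqref{FIvalue}, both justified by Lemma \ref{P4_lemma4} and the orthogonality relation \eqref{P4_haar_ortho}, are applied to each summand. The key step that distinguishes Case II is the evaluation of the elementary integrals involving the new prefactor: $\int_0^1\bigl(\x-\tfrac{b}{a}\bigr)^2\,d\x=\tfrac{b^2}{a^2}-\tfrac{b}{a}+\tfrac13$ and $\int_0^1\bigl(\x-\tfrac{b}{a}\bigr)\,d\x=\tfrac12-\tfrac{b}{a}$, which replace the Case I values $1-\tfrac{a}{b}+\tfrac{a^2}{3b^2}$ and $1-\tfrac{a}{2b}$. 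These are exactly the factors $\bigl(\tfrac{b^2}{a^2}-\tfrac{b}{a}+\tfrac13\bigr)$ and $\bigl(\tfrac12-\tfrac{b}{a}\bigr)$ that appear throughout the stated expression for $d_1$.

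Specializing to $\alpha_1=\alpha_2=\alpha$ and $\omega_1=\omega_2=k$, the common factor $k^2\bigl(\tfrac{1}{2(2^\alpha-1)}\bigr)^2\bigl(\tfrac{1}{2^{J+1}}\bigr)^{2\alpha}$ factors out and the surviving bracket is collected and named $d_1^2$, yielding $\|E_{M1}\|_2\le \tfrac{d_1 k}{2(2^\alpha-1)}\bigl(\tfrac{1}{2^{J+1}}\bigr)^\alpha$. I would then repeat the identical argument for $\|E_{M2}\|_2$ starting from \eqref{P4_sec3eqn15}, where the prefactor is $\bigl(\x-\tfrac{d}{c}\bigr)$, producing the integrals $\tfrac{d^2}{c^2}-\tfrac{d}{c}+\tfrac13$ and $\tfrac12-\tfrac{d}{c}$ and hence the constant $d_2$. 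Summing the two bounds gives $\|E_M\|\le(d_1+d_2)\tfrac{k}{2(2^\alpha-1)}\bigl(\tfrac{1}{2^{J+1}}\bigr)^\alpha=\mathcal{O}\bigl(\tfrac{1}{2^{J+1}}\bigr)^\alpha$, and since $M=2^J$, letting $J\to\infty$ establishes convergence.

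The main obstacle I anticipate is not conceptual but the algebraic bookkeeping of the cross terms: after expanding the square there are several mixed products of the coupling constants $\mu_3,\mu_4,\eta_1,\eta_2,\nu_1,\nu_2$ together with the boundary ratios $\tfrac{b}{a},\tfrac{d}{c}$, and each must be matched to its corresponding summand in $d_1$ and $d_2$ while applying Cauchy--Schwarz-type bounds consistently (so that products such as $\mu_4\eta_2\delta_1\delta_2$ and $\mu_3\eta_1\delta_2^2$ appear with the correct signs). Because the Case II denominator mixes the shift parameters $\nu_1,\nu_2$ inside the bracket (unlike Case I, where they enter only through $1-\tfrac{a\nu_2}{b}$ and $1-\tfrac{c\nu_1}{d}$), extra care is needed to ensure the bracketed denominator is nonvanishing; I would record this nondegeneracy as the standing hypothesis that the determinant-type quantity $(1-\tfrac{b}{a})(1-\tfrac{d}{c})-\mu_3\mu_4\eta_1\eta_2(\nu_1-\tfrac{d}{c})(\nu_2-\tfrac{b}{a})\neq0$ so that the representations \eqref{P4_sec3eqn14}--\eqref{P4_sec3eqn15} are well defined.
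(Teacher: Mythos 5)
Your proposal is correct and is exactly the argument the paper intends: the paper's own proof of this theorem is the single line ``The proof is similar, as done in Theorem \ref{P4_thrm1},'' and your write-up fleshes out precisely that repetition --- replacing the Case I prefactors $\left(1-\tfrac{a\x}{b}\right)$, $\left(1-\tfrac{c\x}{d}\right)$ by $\left(\x-\tfrac{b}{a}\right)$, $\left(\x-\tfrac{d}{c}\right)$, swapping the denominator bracket, and recomputing the elementary integrals to get the factors $\tfrac{b^2}{a^2}-\tfrac{b}{a}+\tfrac13$ and $\tfrac12-\tfrac{b}{a}$ (and their $c,d$ analogues) that appear in $d_1$ and $d_2$. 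Your added observation that the determinant-type denominator must be nonvanishing is a hypothesis the paper leaves implicit, and stating it explicitly only strengthens the argument.
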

\begin{proof} The proof is similar, as done in above Theorem \ref{P4_thrm1}.
\end{proof}
\section{Numerical Experiments}\label{P4_Examples}
In this section, we present numerical experiments to validate the proposed method by considering five test problems of the form \eqref{P4_problem}. Additionally, we introduce stability analysis and maximum residual error, which will be utilized throughout this section. All the experiments are conducted using \textit{Mathematica 11.3} on a system with a 64-bit Intel Core i7 CPU and 16GB of RAM.\\

\noindent \textbf{Stability analysis:-} To ensure the stability of the proposed algorithm, it is essential to evaluate the condition number of the resulting nonlinear system of algebraic equations. These equations arise from coupled fractional differential equations after applying the proposed method. The algorithm is considered stable if the condition number of the coefficient matrix remains bounded \cite{SUN_2021106732}. In this context, the coefficient matrix is constructed using Haar functions, the unknown variables correspond to wavelet coefficients, and the right-hand side consists of known values. A commonly used criterion for numerical stability \cite{narendra2025} states that the method remains stable if the norm of the inverse of the coefficient matrix exists and is bounded by a fixed constant. This guarantees that small variations in input do not cause significant errors in the solution.\\

\noindent \textbf{Maximum residual error:} For Lane-Emden-type equations, where exact solutions are unavailable, the maximum residual error is defined as:
$$E = \max_{\x_i \in [0, 1]} r(x_i), \quad r(x_i) = \sqrt{r_1(\x_i)^2 + r_2(\x_i)^2},$$
where $r(x_i)$ denotes the total residual error at a given point $x_i$. The component residual errors $r_1(\x)$ and $r_2(\x)$ for the system are defined as follows:
\begin{align*}
    r_1(\x) &= \left|\D^{\alpha_1} \y^J(\x) + \frac{k_1}{\x^{\gamma_1}} \D^{\beta_1} \y^J(\x) -f_1(\x,\y^J,\z^J) \right|, \\
    r_2(\x) &= \left|\D^{\alpha_2} \z^J(\x) + \frac{k_2}{\x^{\gamma_2}} \D^{\beta_2} \z^J(\x) -f_2(\x,\y^J,\z^J) \right|,
\end{align*}
where, $\y^J(\x)$ and $\z^J(\x)$ denote the numerical solutions at the $J^{th}$ resolution level. 

\subsection{Fractional Initial Value Problem(FrIVP)}\label{P4_IVP}
We consider the following FrIVP:
\begin{equation}
\begin{aligned}
& \D^{\alpha_1} \y(\x) +\frac{1}{\x} \D^{\beta_1} \y(\x) = \z^3 \left(\y^2+1 \right),\\
& \D^{\alpha_2} \z(\x) + \frac{3}{\x} \D^{\beta_2} (\z (\x))= -\z^5 \left(\y^2+3 \right),\\
& \y(0)=1, ~\y'(0)=0, \quad \z(0)=1, ~\z'(0)=0.
\end{aligned}
\end{equation}
\begin{itemize}
    \item[\textbf{(a)} ]We present our findings systematically through figures and tables. Figure \ref{P4_IVP_fig} illustrates the Haar solution and residual error, arranged column-wise for different values $\alpha_1, ~\beta_1, ~\alpha_2, ~\beta_2.$ The first column displays the Haar solutions $\y(\x), \z(\x)$ along with the residual errors in $r_1(\x)$ and $r_2(\x)$ for varying values of $J$ at $\alpha_1=1.58, ~\beta_1=0.58,~ \alpha_2=1.59, ~ \beta_2=0.59$ respectively.
    \item[\textbf{(b)}]The second column presents the same components for $\alpha_1=1.7,~ \beta_1=0.7,~ \alpha_2=1.71, ~\beta_2=0.71,$ while the third column follows with solutions and residual errors at $\alpha_1=1.85,~ \beta_1=0.85,~ \alpha_2=1.86,~ \beta_2=0.86.$ The fourth column showcases results for $\alpha_1=1.98,~ \beta_1=0.98,~ \alpha_2=1.99,~ \beta_2=0.99.$
    \item[\textbf{(c)} ]Finally, the last column presents the Haar solutions $\y(\x), \z(\x)$ alongside the absolute error in $\y(\x)$ and $\z(\x)$ for various values of $J$ at $\alpha_1=2,\; \beta_1=1,\; \alpha_2=2,\; \beta_2=1.$ At $\alpha_1 = \alpha_2 =2,~ \beta_1 = \beta_2=1$, the proposed problem reduces to the classical coupled Lane-Emden equations considered by Narendra et al. \cite{kumar2023hybrid}.
    \item[\textbf{(d)} ]In Table \ref{P4_table1_IVP}, we present the total residual error of problem \ref{P4_IVP} for  $J=3, \; 4,\; 5$ at  $\alpha_1=1.58,\; 1.7, ~\alpha_2=1.59, \;1.71 , ~ \beta_1=0.58,\; 0.7,~ \beta_2=0.59,~ 0.71.$ Similarly, Table \ref{P4_table2_IVP} displays the total residual error for  $J=3,\; 4,\; 5$ at  $\alpha_1=1.85,\; 1.98, ~\alpha_2=1.86, \; 1.99 , ~\beta_1=0.85, \; 0.98, ~\beta_2=0.86, \; 0.99.$
    \item[\textbf{(e)} ]The tables and plots demonstrate that the residual error decreases as $J$ increases for the given values of parameters $\alpha_1,\; \beta_1, \; \alpha_2,\; \beta_2,$ A similar trend is observed when $J$ remains fixed while varying $\alpha_1,\; \beta_1, \; \alpha_2, \; \beta_2,$ which validates the accuracy of the proposed method. The residual error shown in Figure \ref{P4_IVP_fig} serves as a measure of the precision of our method. These results demonstrate the effectiveness of the fractional Haar wavelet collocation method in tackling the problem.
    \item[\textbf{(f)} ]We also observe that the condition number of the coefficient matrix remains bounded across all considered cases. Additionally, the norm of the inverse of this matrix is bounded as well. Hence, we conclude that the proposed method demonstrates numerical stability.
    \item[\textbf{(g)} ]Despite changes in the initial solution, the final solution remains unchanged across all cases of $(\alpha_1, ~\beta_1, ~\alpha_2, ~\beta_2)$. This consistency confirms the stability of the proposed method.
    \end{itemize}
\begin{figure}[H]
  \centering
  \includegraphics[width=1.0\textwidth]{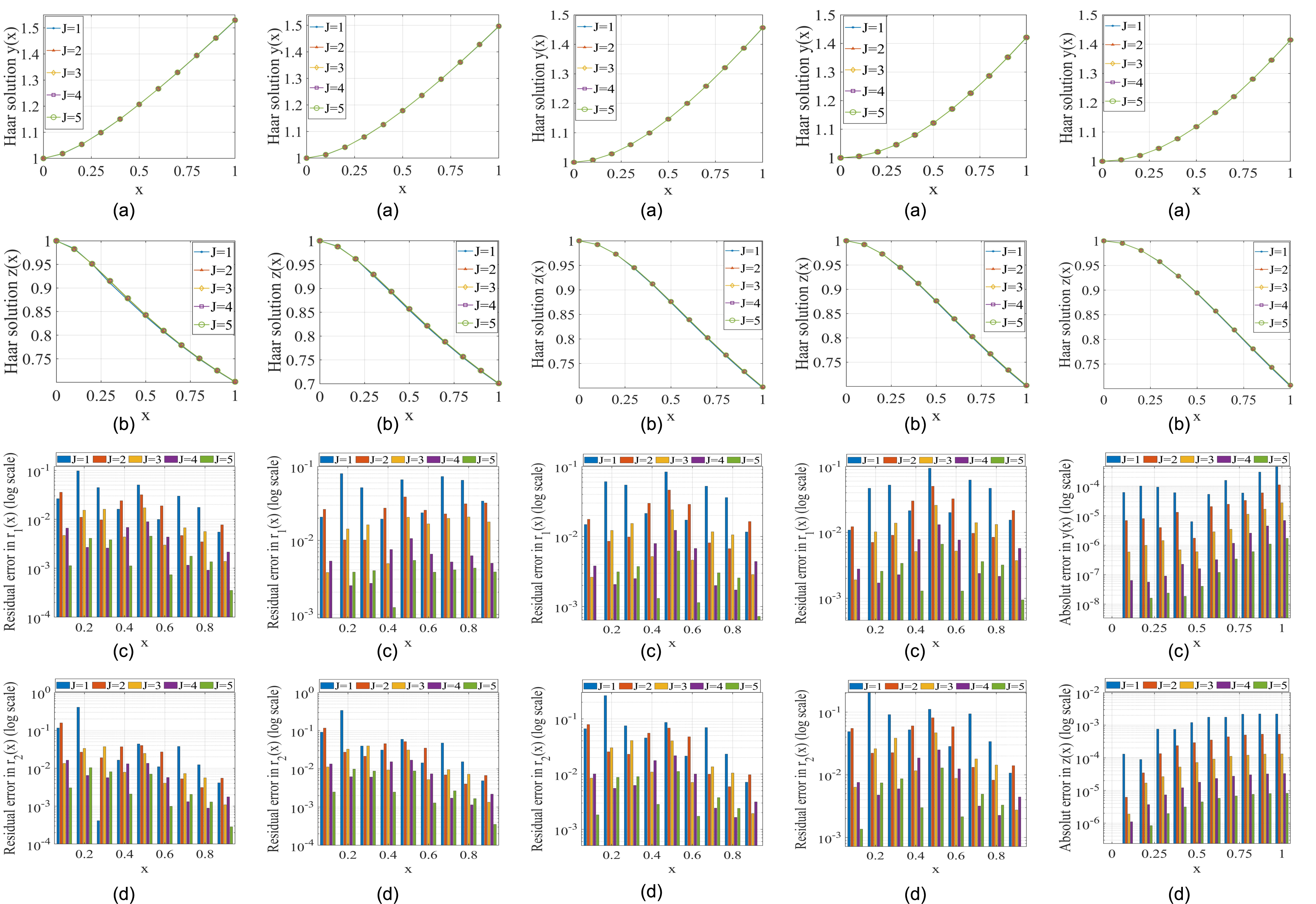} 
  \caption{Solution and error plots of example \ref{P4_IVP}, with varying parameters $\alpha_1$, $\beta_1$, $\alpha_2$, and $\beta_2$ arranged column-wise (from left to right): (a) $\alpha_1 = 1.58$, $\beta_1 = 0.58$, $\alpha_2 = 1.59$, $\beta_2 = 0.59$; (b) $\alpha_1 = 1.7$, $\beta_1 = 0.7$, $\alpha_2 = 1.71$, $\beta_2 = 0.71$; (c) $\alpha_1 = 1.85$, $\beta_1 = 0.85$, $\alpha_2 = 1.86$, $\beta_2 = 0.86$; (d) $\alpha_1 = 1.98$, $\beta_1 = 0.98$, $\alpha_2 = 1.99$, $\beta_2 = 0.99$; and (e) $\alpha_1 = 2$, $\beta_1 = 1$, $\alpha_2 = 2$, $\beta_2 = 1$.}
  \label{P4_IVP_fig}
\end{figure}
\begin{table}[H]
  \begin{center}
    \caption{Total residual error of problem \ref{P4_IVP} at varying values of $\alpha_1,~  \beta_1,~\alpha_2~\text{and}~ \beta_2$ for various value of $J.$}
    \label{P4_table1_IVP}
    \begin{tabular}{|c|c|c|c|c|c|c|}
    \hline
     \multirow{2}{*}{x} & \multicolumn{3}{c|}{$(\alpha_1,~\beta_1, ~\alpha_2,~\beta_2) = (1.58,~ 0.58,~1.59,~0.59)$} &  \multicolumn{3}{c|}{$(\alpha_1,~\beta_1,~ \alpha_2,~\beta_2) =
(1.7,~ 0.7,~ 1.71,~0.71)$}\\
\cline{2-7}
&$J=3$ &$J=4$  &$J=5$ &$J=3$ &$J=4$ & $J=5$\\
\hline
0.1 & 0.014275 & 0.017655 & 0.003259 & 0.011851 & 0.014496 & 0.002643 \\
\hline
0.2 & 0.036819 & 0.007014 & 0.011174 & 0.035923 & 0.006708 & 0.010599 \\
\hline
0.3 & 0.040516 & 0.006223 & 0.008958 & 0.043111 & 0.006658 & 0.009608  \\
\hline
0.4 & 0.009009 & 0.01477 & 0.002377 & 0.010576 & 0.017144 & 0.002765  \\
\hline
0.5 & 0.03011 & 0.0162 & 0.008383 & 0.037501 & 0.020049 & 0.010349 \\
\hline
0.6 & 0.00502 & 0.007184 & 0.001235 & 0.006438 & 0.009211 & 0.001581  \\
\hline
0.7 & 0.00988 & 0.001749 & 0.0027 & 0.013008 & 0.002292 & 0.003531  \\
\hline
0.8 & 0.007947 & 0.001272 & 0.001869 & 0.01056 & 0.001693 & 0.002488 \\
\hline
0.9 & 0.001757 & 0.002774 & 0.000453 & 0.002385 & 0.003745 & 0.000613  \\
\hline
\textbf{$E$} & \textbf{0.040516} & \textbf{0.017655} & \textbf{0.011174} & \textbf{0.043111} & \textbf{0.020049} & \textbf{0.010599} \\
\hline
    \end{tabular}
  \end{center}
\end{table}
\begin{table}[H]
  \begin{center}
    \caption{Total residual error of problem \ref{P4_IVP} at varying values of $\alpha_1,~  \beta_1,~\alpha_2~\text{and}~ \beta_2$ for various value of $J.$}
    \label{P4_table2_IVP}
    \begin{tabular}{|c|c|c|c|c|c|c|}
    \hline
     \multirow{2}{*}{x} & \multicolumn{3}{c|}{$(\alpha_1,~\beta_1,~ \alpha_2,~\beta_2) = (1.85,~ 0.85,~ 1.86,~0.86)$} &  \multicolumn{3}{c|}{$(\alpha_1,~\beta_1,~ \alpha_2,~\beta_2) = (1.98,~ 0.98, ~1.99,~0.99)$}\\
\cline{2-7}
&$J=3$ &$J=4$  &$J=5$ &$J=3$ &$J=4$ & $J=5$\\
\hline
0.1 & 0.008864263 & 0.010780913 & 0.001941089 & 0.006640035 & 0.008068393 & 0.001439572\\
 \hline
0.2 & 0.032358591 & 0.005923749 & 0.009281468 & 0.028024954 & 0.005060426 & 0.007881939\\
 \hline
0.3 & 0.043279505 & 0.006730453 & 0.009744434 & 0.040923062 & 0.006402001 & 0.009295701\\
 \hline
0.4 & 0.012065981 & 0.019315675 & 0.003124156 & 0.01270759 & 0.02015193 & 0.003268217\\
 \hline
0.5 & 0.04676703 & 0.024823853 & 0.012775002 & 0.053577266 & 0.028270966 & 0.014510215\\
 \hline
0.6 & 0.008456794 & 0.012105698 & 0.002072213 & 0.010221575 & 0.014652558 & 0.002502738\\
 \hline
0.7 & 0.017902254 & 0.00313974 & 0.004827635 & 0.022770754 & 0.003978818 & 0.006107856\\
 \hline
0.8 & 0.014882517 & 0.002386259 & 0.003508788 & 0.019482779 & 0.003124543 & 0.004595591\\
 \hline
0.9 & 0.003458445 & 0.005406854 & 0.00088531 & 0.004644217 & 0.007244466 & 0.001186479\\
 \hline
\textbf{$E$} & \textbf{0.04676703} & \textbf{0.024823853} & \textbf{0.012775002} & \textbf{0.053577266} & \textbf{0.028270966} & \textbf{0.014510215}\\
 \hline
 \end{tabular}
  \end{center}
\end{table}
\subsection{Fractional Boundary Value Problem (FrBVP)}\label{P4_BVP}
We consider the following FrBVP as:
\begin{equation}
\begin{aligned}
& \D^{\alpha_1} \y(\x) +\frac{5}{\x} \D^{\beta_1} \y(\x) = - 8 \exp{\left(\y-1 \right)} + 2 \exp{\left(- \left(\frac{\z-1}{2} \right)\right)},\\
& \D^{\alpha_2} \z(\x) + \frac{3}{\x} \D^{\beta_2} (\z (\x))= 8 \exp{\left(- \left(\z-1 \right) \right)} +\exp{\left(\frac{\y-1}{2}\right)},\\
& \y'(0)=0, ~\y(1)=1-2\log{2},\quad \z'(0) =0,~ \z(1) = 1+2 \log{2}.
\end{aligned}
\end{equation}

\begin{itemize}
    \item[\textbf{(a)}] Figure \ref{P4_BVP_fig} illustrates the Haar solution and residual error, organized column-wise for different values $\alpha_1,\; \beta_1, \; \alpha_2, \; \text{and} \; \beta_2.$ In the first column, we display the Haar solutions $\y(\x), \z(\x)$ along with the residual errors $r_1(\x)$ and $r_2(\x)$ for varying values of $J$ at $\alpha_1=1.56,~ \beta_1=0.56,~ \alpha_2=1.57,~ \beta_2=0.57$ respectively. 
    \item[\textbf{(b)}] The second column showcases how these components evolve when the parameters are adjusted for $\alpha_1=1.72,\; \beta_1=0.72,\; \alpha_2=1.73,\; \beta_2=0.73.$ As we move to the third column, we observe the solutions and residual errors at $\alpha_1=1.83,\; \beta_1=0.83,\; \alpha_2=1.84,\; \beta_2=0.84.$ The fourth column showcases results for $\alpha_1=1.99,\; \beta_1=0.98,\; \alpha_2=1.98,\; \beta_2=0.99.$
    \item[\textbf{(c)}] Finally, the last column presents the Haar solutions $\y(\x),~ \z(\x)$ alongside the absolute error in $\y(\x)$ and $\z(\x)$ these for various values of $J$ at $\alpha_1= \alpha_2=2,\; \beta_1= \beta_2=1.$ For $\alpha_1=\alpha_2=2,~ \beta_1= \beta_2=1,$ the proposed problem reduces to the classical case of BVP discussed in \cite{verma2021haar}. 
    \item[\textbf{(d)}] To further quantify these observations, Table \ref{P4_table1_BVP} presents the computed total residual error for the problem \ref{P4_BVP} for  $J=3,\; 4,\; 5$ at  $\alpha_1=1.56,\; 1.72,~ \alpha_2=1.57, \; 1.73, ~\beta_1=0.56, 0.72, ~\beta_2=0.57, 0.73.$
    \item[\textbf{(e)}] Similarly, Table \ref{P4_table2_BVP} extends this analysis by presenting total residual error for  $J=3,\; 4,\; 5$ at  $\alpha_1=1.83,~ 1.99, ~\alpha_2=1.84,~1.98,~ \beta_1=0.83,~ 0.98, ~\beta_2=0.84,~ 0.99.$ For various values of $\alpha_1,\; \beta_1, \; \alpha_2, \;\beta_2,$ the tables and figures clearly show that the residual error decreases as $J$ grows, and for $J$ staying constant while $\alpha_1,\; \beta_1,\; \alpha_2,\; \beta_2$ varies, a similar pattern is seen, which demonstrates the accuracy of our approach.
    \item[\textbf{(f)} ]Our observations indicate that the condition number of the coefficient matrix stays bounded under all tested scenarios. Furthermore, the norm of the inverse of this matrix remains bounded. Consequently, the proposed method exhibits numerical stability.
    \item[\textbf{(g)} ]The final solution remains invariant to changes in the initial solution for all parameter cases $(\alpha_1, ~\beta_1, ~\alpha_2, ~\beta_2)$, demonstrating the stability of the proposed method.
\end{itemize}
\begin{figure}[H]
  \centering
  \includegraphics[width=1.0\textwidth]{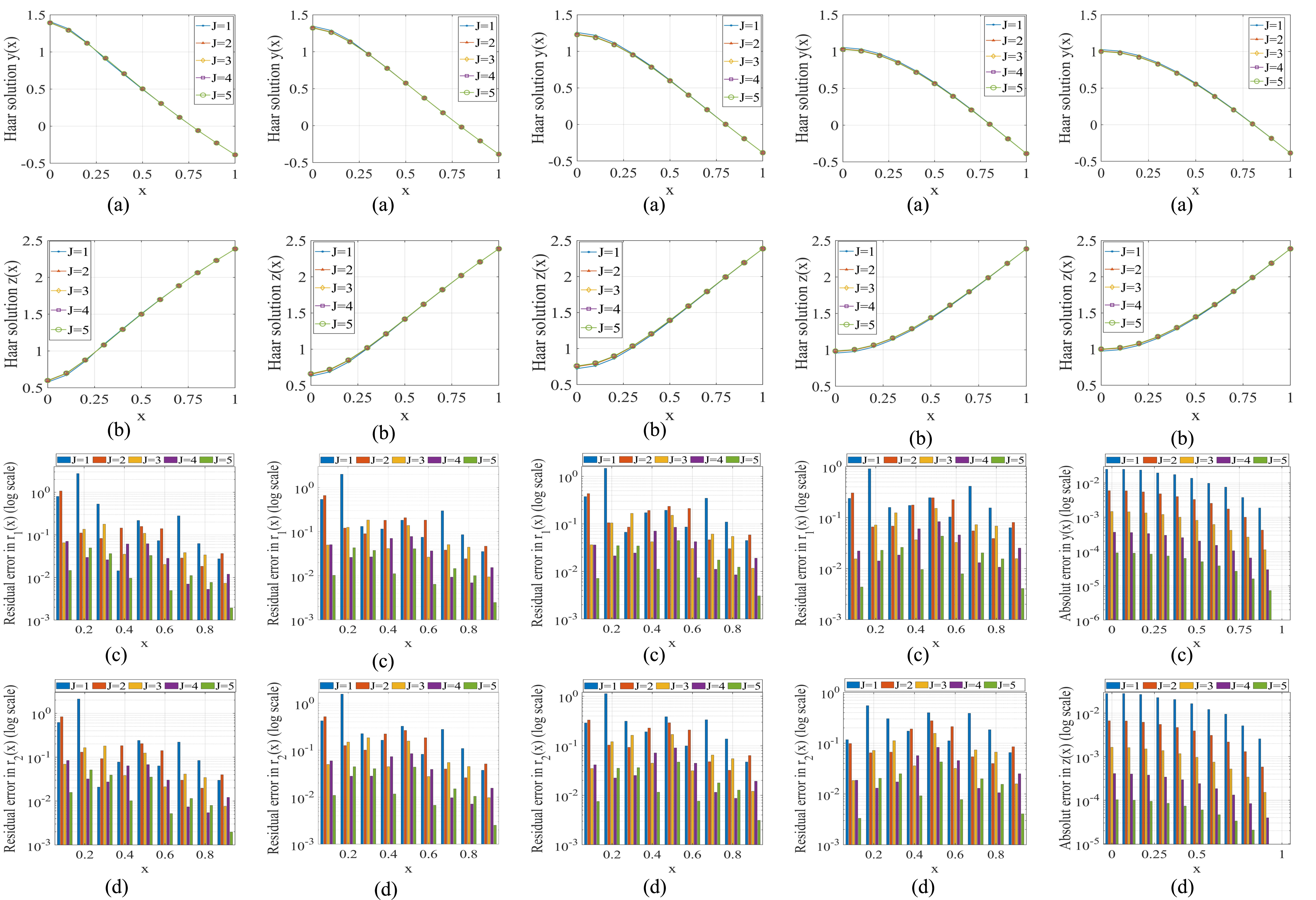} 
  \caption{Solution and error plots of example \ref{P4_BVP}, with varying parameters $\alpha_1$, $\beta_1$, $\alpha_2$, and $\beta_2$ arranged column-wise (from left to right): (a) $\alpha_1 = 1.56$, $\beta_1 = 0.56$, $\alpha_2 = 1.57$, $\beta_2 = 0.57$; (b) $\alpha_1 = 1.72$, $\beta_1 = 0.72$, $\alpha_2 = 1.73$, $\beta_2 = 0.73$; (c) $\alpha_1 = 1.83$, $\beta_1 = 0.83$, $\alpha_2 = 1.84$, $\beta_2 = 0.84$; (d) $\alpha_1 = 1.99$, $\beta_1 = 0.98$, $\alpha_2 = 1.98$, $\beta_2 = 0.99$; and (e) $\alpha_1 = 2$, $\beta_1 = 1$, $\alpha_2 = 2$, $\beta_2 = 1$.} 
  \label{P4_BVP_fig}
\end{figure}
\begin{table}[H]
  \begin{center}
    \caption{Total residual error of problem \ref{P4_BVP} at various values of $\alpha_1,~  \beta_1,~\alpha_2~\text{and}~ \beta_2$ for various value of $J.$}
    \label{P4_table1_BVP}
    \begin{tabular}{|c|c|c|c|c|c|c|}
    \hline
     \multirow{2}{*}{x} & \multicolumn{3}{c|}{$(\alpha_1,\;\beta_1,\; \alpha_2,\;\beta_2) = (1.56,\; 0.56,\; 1.57,\; 0.57)$} &  \multicolumn{3}{c|}{$(\alpha_1,\; \beta_1,\; \alpha_2,\; \beta_2) = (1.72,\; 0.72,\; 1.73,\; 0.73)$}\\
\cline{2-7}
&$J=3$ &$J=4$  &$J=5$ &$J=3$ &$J=4$ & $J=5$\\
\hline
0.1 & 0.095166285 & 	0.10985955 & 	0.021471377 & 	0.070498547	 & 0.078189757	 & 0.014967547\\
\hline
0.2	 & 0.213666169	 & 0.043545515 & 	0.071255702 & 	0.194707716 & 	0.038333606 & 	0.061910197\\
\hline
0.3 & 	0.253668907 & 	0.037772527 & 	0.053429333 & 	0.25925813 & 	0.038851241 & 	0.055135781\\
\hline
0.4 & 	0.052158857 & 	0.088335812 & 	0.014027915 & 	0.060996731 & 	0.101456171 & 	0.016160048\\
\hline
0.5 & 	0.165401489 & 	0.091864603 & 	0.048201974 & 	0.208807915 & 	0.114805432 & 	0.0600067\\
\hline
0.6 & 	0.029378282 & 	0.041222003 & 	0.007173726 & 	0.038215848 & 	0.053589371 & 	0.009299162\\
\hline
0.7 & 	0.056321995 & 	0.010212269 & 	0.015931865 & 	0.074620523 & 	0.013467348 & 	0.020971629\\
\hline
0.8 & 	0.048256772 & 	0.007600155 & 	0.01107051 & 	0.063305242 & 	0.009962144 & 	0.014498798\\
\hline
0.9 & 	0.010589804 & 	0.017058372 & 	0.002765188 & 	0.013556134 & 	0.021839492 & 	0.003536987\\
\hline
\textbf{$E$} & 	\textbf{0.253668907} & 	\textbf{0.10985955} & 	\textbf{0.071255702} & 	\textbf{0.25925813} & 	\textbf{0.114805432}	 & \textbf{0.061910197}\\
 \hline
 \end{tabular}
  \end{center}
\end{table}
\begin{table}[H]
  \begin{center}
    \caption{Total residual error of problem \ref{P4_BVP} at varying values of $\alpha_1,~  \beta_1,~\alpha_2~\text{and}~ \beta_2$ for various value of $J.$}
    \label{P4_table2_BVP}
    \begin{tabular}{|c|c|c|c|c|c|c|}
    \hline
     \multirow{2}{*}{x} & \multicolumn{3}{c|}{$(\alpha_1,\; \beta_1,\; \alpha_2,\; \beta_2) = (1.83,~ 0.83,~ 1.84,~ 0.84)$} &  \multicolumn{3}{c|}{$(\alpha_1,~ \beta_1,~ \alpha_2,~ \beta_2) = (1.99,~ 0.98,~ 1.98,~ 0.99)$}\\
\cline{2-7}
&$J=3$ &$J=4$  &$J=5$ &$J=3$ &$J=4$ & $J=5$\\
\hline
0.1 & 0.050311181 &	0.054944919 &	0.010402485 &	0.02412831 &	0.029017974 &	0.005490234\\
\hline
0.2 & 0.160152291 &	0.030928128 &	0.049577862 &	0.101558767 &	0.019277344 &	0.030691359\\
\hline
0.3 & 0.233192417 &	0.035122382 &	0.049976443 &	0.166824304 &	0.025320481 &	0.036177022\\
\hline
0.4 & 0.061345757 &	0.100767373 &	0.016092423 &	0.051302867 &	0.0830128 &	0.013314\\
\hline
0.5 & 0.227914316 &	0.124284553 &	0.064743762 &	0.21816228 &	0.117597412 &	0.060947534\\
\hline
0.6 & 0.043713233 &	0.061346376 &	0.010615007 &	0.045695843 &	0.06440292 &	0.011090827\\
\hline
0.7 & 0.08937878 &	0.016042336 &	0.024925141 &	0.103559247 &	0.018403497 &	0.02846982\\
\hline
0.8 & 0.077366021 &	0.012177103 &	0.017726251 &	0.095314179 &	0.015048309 &	0.021950448\\
\hline
0.9 & 0.016838413 &	0.027052393 &	0.004381459 &	0.022312941 &	0.035555332 &	0.00576867\\
\hline
\textbf{$E$} & 	\textbf{0.233192417} & \textbf{0.124284553} &	\textbf{0.064743762} &	\textbf{0.21816228} &	\textbf{0.117597412}	 &  \textbf{0.060947534}\\
 \hline
 \end{tabular}
  \end{center}
\end{table}

\subsection{Fractional Four-Point Boundary Value Problem}\label{P4_4ptBVP}
We consider the following fractional four-point boundary value problem as:
\begin{equation}
\begin{aligned}
& \D^{\alpha_1} \y(\x) +\frac{1/2}{\x} \D^{\beta_1} \y(\x) = -\left\{ \frac{99}{35} \x -\frac{1}{2} + \z \left(\x^2 -\frac{66}{35} \x^3 + \frac{1089}{1225} \x^4 \right) - \y^2 \z \right\},\\
& \D^{\alpha_2} \z(\x) + \frac{1/2}{\x} \D^{\beta_2} \z(\x)= -\left\{ -\frac{24}{35} \x + \frac{64}{1225} \x^5 - \frac{2112}{42875} \x^6 - \y \z^2 \right\},\\
& \y(0)=0, ~ \y(1) = \z(1/2), \quad \z(0)=0, ~ \z(1) = \y(1/3).
\end{aligned}
\end{equation}
\begin{itemize}
    \item[\textbf{(a)}]  Figure \ref{P4_4ptBVP_fig} illustrates the Haar solution and residual error, arranged column-wise for different parameter values $\alpha_1,\; \beta_1,\; \alpha_2,\; \; \text{and} \; \beta_2.$  The first column displays the Haar solutions $\y(\x)$ and $\z(\x)$ with the residual errors $r_1(\x)$ and $r_2(\x)$, for varying values of parameter $J$. This column focuses on the parameter set of $\alpha_1=1.56$,\; $\beta_1=0.58$,\; $\alpha_2=1.58$, and $\beta_2=0.56$. 
    \item[\textbf{(b)}] Moving to the second column, we present the corresponding results for the adjusted parameters $\alpha_1=1.69$, $\beta_1=0.71$, $\alpha_2=1.71$, and $\beta_2=0.70$. The third column continues this thorough examination with solutions and residual errors for $\alpha_1=1.84$, $\beta_1=0.85$, $\alpha_2=1.85$, and $\beta_2=0.86$. The fourth column shows the same components for the values $\alpha_1=1.98$, $\beta_1=0.99$, $\alpha_2=1.99$, and $\beta_2=0.98$.
    \item[\textbf{(c)}] Finally, the last column encapsulates the Haar solutions $\y(\x)$ and $\z(\x)$ with the absolute errors in both solutions for various values of $J$ at $\alpha_1=2$, $\beta_1=1$, $\alpha_2=2$ and $\beta_2=1$.  Here, when $\alpha_1 =\alpha_2 = 2, ~\beta_1= \beta_2 =1$, the proposed problem reduces to the classical form of the four-point coupled Lane-Emden equations discussed in \cite{verma2021haar}.
    \item[\textbf{(d)}] In Table \ref{P4_table1_4ptBVP}, we diligently present the total residual error associated with problem \ref{P4_4ptBVP} for $J=3,~ 4,~ 5,$ at $\alpha_1=1.56,\; 1.69, ~\alpha_2=1.58,~1.71,~ \beta_1=0.58, 0.71, ~ \beta_2=0.56,~ 0.7.$ Table \ref{P4_table2_4ptBVP} provides a detailed breakdown of the total residual error for the same values of $J=3,~ 4, ~5$ at  $\alpha_1=1.84,~ 1.98, ~\alpha_2=1.85,~1.99, ~\beta_1=0.85, ~0.99, ~\beta_2=0.86,~ 0.98.$
    \item[\textbf{(e)}] The tables and figures clearly show that as $ J $ increases, the residual error consistently decreases, even when $\alpha_1,$ $\beta_1,$ $\alpha_2$, and $\beta_2 $ vary while keeping $J$ constant, a similar trend occurs, highlighting the robustness and accuracy of the method.
    \item[\textbf{(f)}]The analysis reveals that, in all tested cases, the condition number of the coefficient matrix does not exceed certain bounds. Similarly, the inverse matrix norm remains limited, indicating strong numerical stability for the proposed technique.
    \item[\textbf{(g)}] Variations in the initial solution do not affect the final solution for any given $(\alpha_1, ~\beta_1, ~\alpha_2, ~\beta_2)$, indicating the robustness and stability of the proposed method.
 \end{itemize}
\begin{figure}[H]
  \centering
  \includegraphics[width=1.0\textwidth]{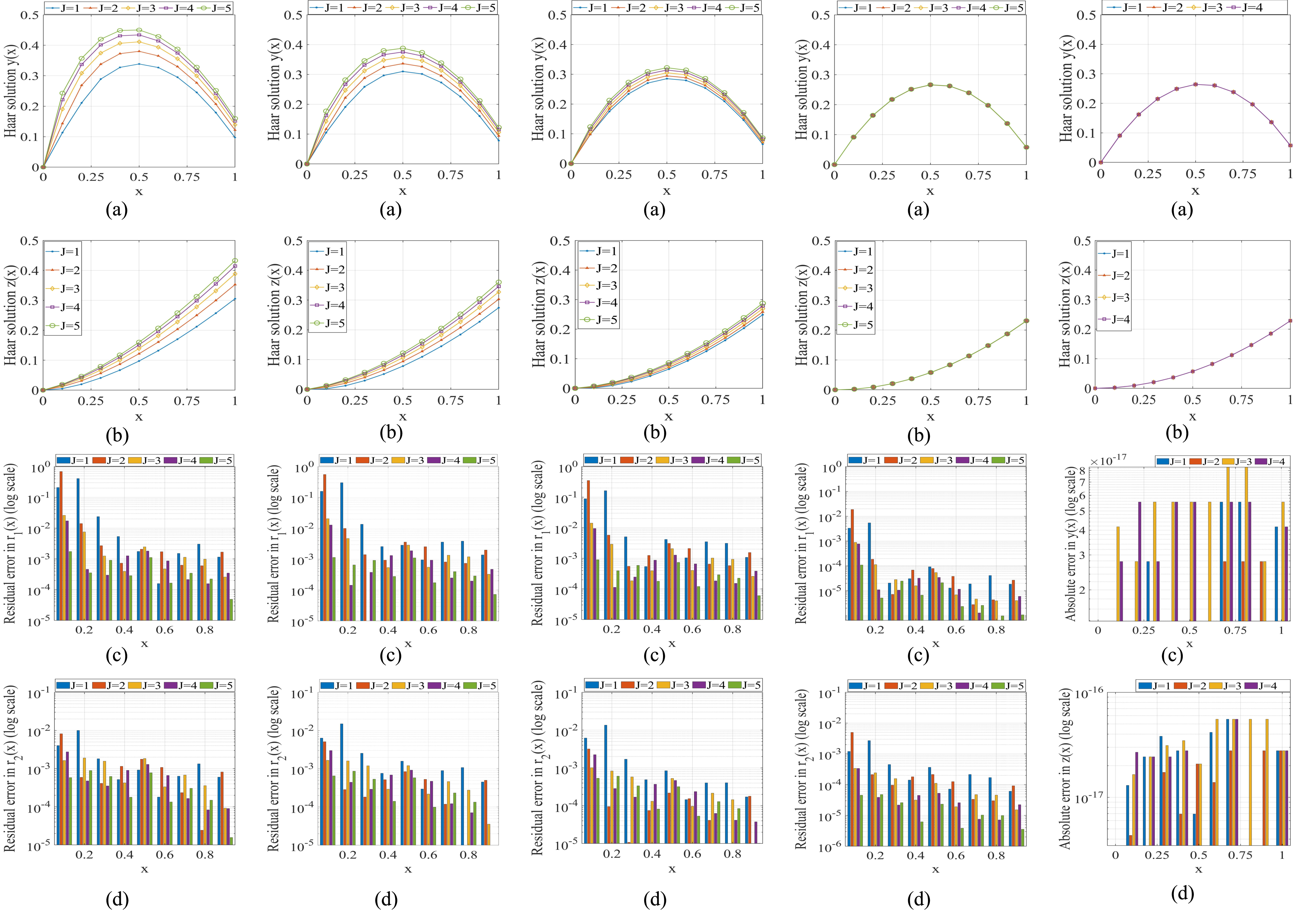} 
  \caption{Solution and error plots of example \ref{P4_4ptBVP}, with varying parameters $\alpha_1$, $\beta_1$, $\alpha_2$, and $\beta_2$ arranged column-wise (from left to right): (a) $\alpha_1 = 1.56$, $\beta_1 = 0.58$, $\alpha_2 = 1.58$, $\beta_2 = 0.56$; (b) $\alpha_1 = 1.69$, $\beta_1 = 0.71$, $\alpha_2 = 1.71$, $\beta_2 = 0.7$; (c) $\alpha_1 = 1.84$, $\beta_1 = 0.85$, $\alpha_2 = 1.85$, $\beta_2 = 0.86$; (d) $\alpha_1 = 1.98$, $\beta_1 = 0.99$, $\alpha_2 = 1.99$, $\beta_2 = 0.98$; and (e) $\alpha_1 = 2$, $\beta_1 = 1$, $\alpha_2 = 2$, $\beta_2 = 1$.} 
  \label{P4_4ptBVP_fig}
\end{figure}

\begin{table}[H]
  \begin{center}
    \caption{Total residual error of problem \ref{P4_4ptBVP} at varying values of $\alpha_1,~  \beta_1,~\alpha_2~\text{and}~ \beta_2$ for various value of $J.$}
    \label{P4_table1_4ptBVP}
    \begin{tabular}{|c|c|c|c|c|c|c|}
    \hline
     \multirow{2}{*}{x} & \multicolumn{3}{c|}{$(\alpha_1,\; \beta_1,\; \alpha_2,\; \beta_2) = (1.56,\; 0.58,\; 1.58,\; 0.56)$} &  \multicolumn{3}{c|}{$(\alpha_1,\; \beta_1,\; \alpha_2,\; \beta_2) = (1.69,\; 0.71,\; 1.71,\; 0.7)$}\\
\cline{2-7}
&$J=3$ &$J=4$  &$J=5$ &$J=3$ &$J=4$ & $J=5$\\
\hline
0.1 &	0.026235012 &	0.017523825 &	0.001837204 &	0.020560805 &	0.013195631 &	0.001319613\\
\hline
0.2 &	0.007761867 &	0.000662974 &	0.000958718 &	0.004963069 &	0.000467463 &	0.001082679\\
\hline
0.3 &	0.002005921 &	0.00046189 &	0.001101721 &	0.001194831 &	0.000476804 &	0.00106586\\
\hline
0.4 &	0.000578982 &	0.00155299 &	0.00033711 &	0.000610977 &	0.001488191 &	0.000311411\\
\hline
0.5 &	0.003076936 &	0.002218475 &	0.001357433 &	0.003114194 &	0.00209396 &	0.001249682\\
\hline
0.6 &	0.000580388 &	0.001088187 &	0.000212686 &	0.000592025 &	0.001047831 &	0.00020001\\
\hline
0.7 &	0.001321078 &	0.000271561 &	0.000462196 &	0.001427184 &	0.000277028 &	0.00045972\\
\hline
0.8 &	0.001047245 &	0.000178229 &	0.000268582 &	0.001231008 &	0.00020884 &	0.000316828\\
\hline
0.9 &	0.000275617 &	0.000359438 &	5.13499E-05 &	0.000328971 &	0.000471681 &	7.2731E-05\\
\hline
\textbf{$E$}	 & \textbf{0.026235012} &	\textbf{0.017523825} &	\textbf{0.001837204} &	\textbf{0.020560805} &	\textbf{0.013195631} &	\textbf{0.001319613}\\
\hline
 \end{tabular}
  \end{center}
\end{table}

\begin{table}[H]
  \begin{center}
    \caption{Total residual error of problem \ref{P4_4ptBVP} at varying values of $\alpha_1,~  \beta_1,~\alpha_2~\text{and}~ \beta_2$ for various value of $J.$}
    \label{P4_table2_4ptBVP}
    \begin{tabular}{|c|c|c|c|c|c|c|}
    \hline
     \multirow{2}{*}{x} & \multicolumn{3}{c|}{$(\alpha_1,\; \beta_1,\; \alpha_2,\; \beta_2) = (1.84,\; 0.85,\; 1.85,\; 0.86)$} &  \multicolumn{3}{c|}{$(\alpha_1,\; \beta_1,\; \alpha_2,\; \beta_2) = (1.98,\; 0.99,\; 1.99,\; 0.98)$}\\
\cline{2-7}
&$J=3$ &$J=4$  &$J=5$ &$J=3$ &$J=4$ & $J=5$\\
\hline
0.1 & 0.01428091 &	0.009766053 &	0.001053608 &	0.000947617 &	0.000840957 &	0.000118814\\
\hline
0.2 & 0.003015328 &	0.00030787 &	0.000723941 &	0.00026836 &	4.02341E-05 &	4.8115E-05\\
\hline
0.3 & 0.000606883 &	0.000299479 &	0.000680265 &	0.000160331 &	2.43722E-05 &	3.60821E-05\\
\hline
0.4 & 0.000417036 &	0.000949359 &	0.000197701 &	3.55083E-05 &	5.55019E-05 &	9.13333E-06\\
\hline
0.5 & 0.002150104 &	0.001364235 &	0.000803253 &	0.00012394 &	6.32511E-05 &	3.15239E-05\\
\hline
0.6 & 0.000416085 &	0.000703256 &	0.000131988 &	2.03894E-05 &	2.8432E-05 &	4.5303E-06\\
\hline
0.7 & 0.001047845 &	0.000194978 &	0.000317806 &	4.76727E-05 &	7.67584E-06 &	1.06179E-05\\
\hline
0.8 & 0.000943306 &	0.000158549 &	0.000240382 &	4.59488E-05 &	7.21583E-06 &	9.97976E-06\\
\hline
0.9 & 0.000261158 &	0.000386741 &	6.15837E-05 &	1.58286E-05 &	2.32056E-05 &	3.73574E-06\\
\hline
\textbf{$E$}	 & \textbf{0.01428091}	 & \textbf{0.009766053}	 & \textbf{0.001053608}	 & \textbf{0.000947617}	 & \textbf{0.000840957}	 & \textbf{0.000118814}\\
\hline
 \end{tabular}
  \end{center}
\end{table}
 \subsection{Fractional Catalytic Diffusion Problem}\label{P4_cat_dif}
We consider the following fractional catalytic diffusion problem as, 
\begin{equation}
\begin{aligned}
& \D^{\alpha_1} \y(\x) + \frac{2}{x} \D^{\beta_1} \y(\x) = - \y^2(\x) - \frac{2}{5} \y(\x) \z(\x),\\
& \D^{\alpha_2} \z(\x) + \frac{2}{x} \D^{\beta_2} \z(\x) = - \frac{1}{2} \y^2(\x) - \y(\x) \z(\x),\\
& \y'(0)=0,~ \z'(0)=0, \quad \y(1)=1,~ \z(1)=2.
\end{aligned}
\end{equation} 
\begin{itemize}
    \item[\textbf{(a)}] Figure \ref{P4_cat_dif_fig} visually represents our findings, with each column corresponding to a specific set of parameter values $\alpha_1,\; \beta_1,\; \alpha_2, \; \text{and} \; \beta_2.$ In the first column, we present the Haar solutions $\y(\x),\; \z(\x)$ with the residual errors in $r_1(\x)$ and $r_2(\x)$ for various values of $J$ at $\alpha_1=1.61,\; \beta_1=0.62,\; \alpha_2=1.62,\; \beta_2=0.63$ respectively.
    \item[\textbf{(b)}] The second column illustrates the same components for $\alpha_1=1.74, \;\beta_1=0.74,\; \alpha_2=1.75,\; \beta_2=0.75.$ Moving to the third column, we observe the solutions and residual errors at $\alpha_1=1.85,~ \beta_1=0.84,~ \alpha_2=1.84,~ \beta_2=0.86.$ The fourth column highlights results for $\alpha_1=1.99,~ \beta_1=0.999,~ \alpha_2=1.99,~ \beta_2=0.999.$
    \item[\textbf{(c)}] Finally, the last column presents the Haar solutions $\y(\x),~ \z(\x)$ alongside the residual error in $\y(\x)$ and $\z(\x)$ these for various values of $J$ at $\alpha_1=\alpha_2=2,~ \beta_1=\beta_2=1.$ At $\alpha_1 = \alpha_2= 2,~ \beta_1= \beta_2 =1$, the proposed problem reduces to the classical case of the catalytic diffusion equations discussed in \cite{singh2020solving}.
    \item[\textbf{(d)}]  To further validate these results we present the computed total residual error in table \ref{P4_table1_cat_dif} of problem \ref{P4_cat_dif} for $J=3,\; 4,\; 5$ at  $\alpha_1=1.61, ~1.74, ~\alpha_2=1.62,~1.75, ~\beta_1=0.62,~ 0.74,~ \beta_2=0.63,~ 0.75.$ 
    \item[\textbf{(e)}] Similarly, table \ref{P4_table2_cat_dif} provides the total residual error for $J=3,~ 4,~ 5$ at  $\alpha_1=1.85,~ 1.99, ~\alpha_2=1.84,~1.99,~ \beta_1=0.84,~ 0.999,~ \beta_2=0.86,~ 0.999.$ The tables and figures indicate that as $J$ increases with $\alpha_1,~ \beta_1,~ \alpha_2, ~ \text{and} ~ \beta_2$ fixed, the residual error steadily decreases. Similar trends occurs when $J$ is fixed and $\alpha_1,~ \beta_1,~ \alpha_2, \; \text{and} \; \beta_2$ vary, and when $\alpha_1 = \alpha_2= 2,~ \beta_1= \beta_2 =1$ the residual error becomes negligible.
    \item[\textbf{(f)}] The condition number of the coefficient matrix is consistently bounded throughout the analyzed cases. Additionally, we noted that the norm of its inverse is similarly bounded, demonstrating the numerical stability of the proposed method.
    \item[\textbf{(g)}] For any given set of parameters $(\alpha_1, ~\beta_1, ~\alpha_2, ~\beta_2)$, modifying the initial solution does not impact the final solution, validating the stability of the proposed method.
   \end{itemize}
\begin{figure}[H]
  \centering
  \includegraphics[width=1.0\textwidth]{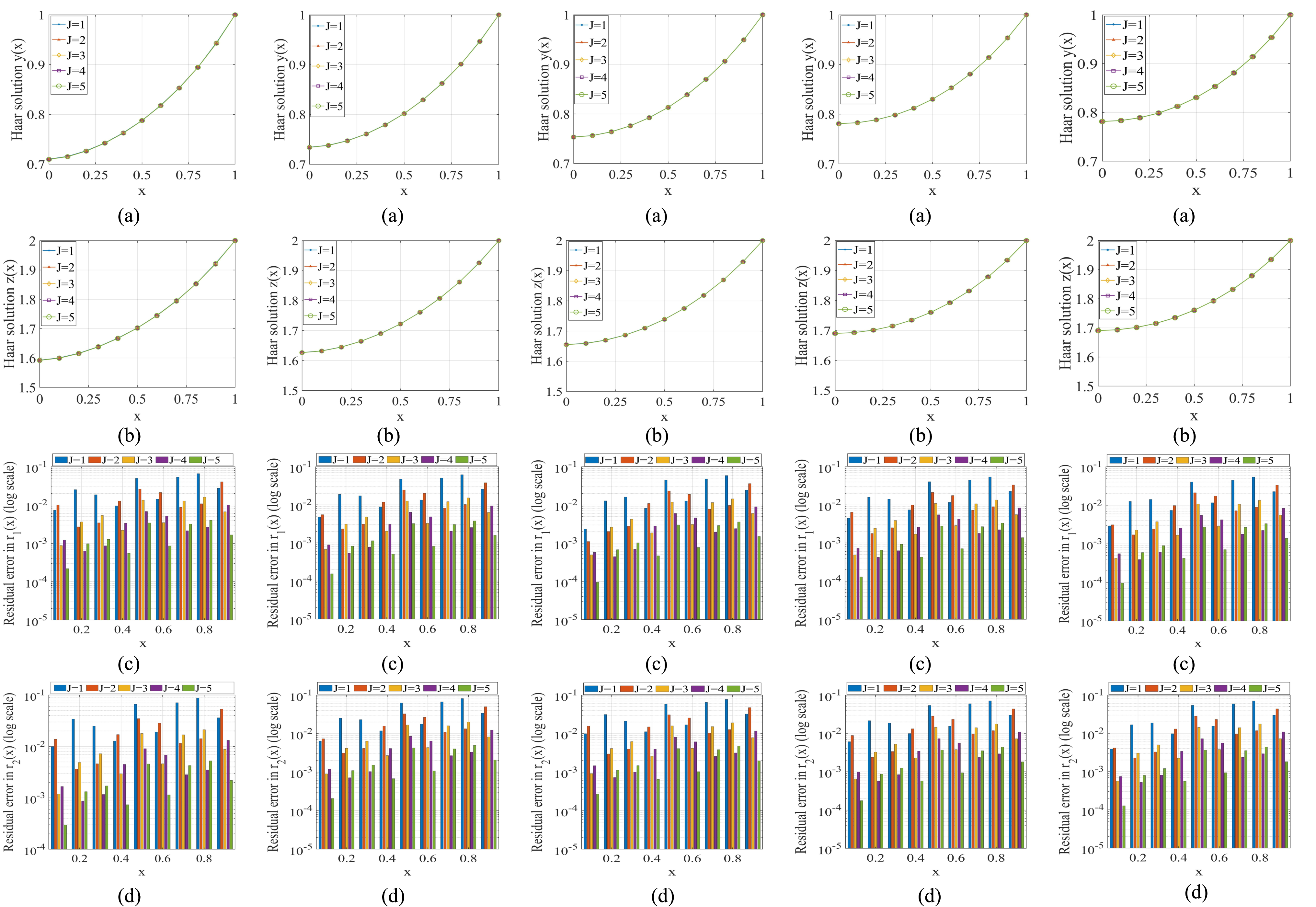} 
  \caption{Solution and error plots of example \ref{P4_cat_dif}, with varying parameters $\alpha_1$, $\beta_1$, $\alpha_2$, and $\beta_2$ arranged column-wise (from left to right): (a) $\alpha_1 = 1.61$, $\beta_1 = 0.62$, $\alpha_2 = 1.62$, $\beta_2 = 0.63$; (b) $\alpha_1 = 1.74$, $\beta_1 = 0.74$, $\alpha_2 = 1.75$, $\beta_2 = 0.75$; (c) $\alpha_1 = 1.85$, $\beta_1 = 0.84$, $\alpha_2 = 1.84$, $\beta_2 = 0.86$; (d) $\alpha_1 = 1.999$, $\beta_1 = 0.999$, $\alpha_2 = 1.99$, $\beta_2 = 0.999$; and (e) $\alpha_1 = 2$, $\beta_1 = 1$, $\alpha_2 = 2$, $\beta_2 = 1$.} 
  \label{P4_cat_dif_fig}
\end{figure}

\begin{table}[H]
  \begin{center}
    \caption{Total residual error of problem \ref{P4_cat_dif} at varying values of $\alpha_1,~  \beta_1,~\alpha_2~\text{and}~ \beta_2$ for various value of $J.$}
    \label{P4_table1_cat_dif}
    \begin{tabular}{|c|c|c|c|c|c|c|}
    \hline
     \multirow{2}{*}{x} & \multicolumn{3}{c|}{$(\alpha_1,\;\beta_1, \; \alpha_2,\; \beta_2) = (1.61,\; 0.62,\; 1.62,\; 0.63)$} &  \multicolumn{3}{c|}{$(\alpha_1,\; \beta_1,\; \alpha_2,\; \beta_2) = (1.74,\; 0.74,\; 1.75,\; 0.75)$}\\
\cline{2-7}
&$J=3$ &$J=4$  &$J=5$ &$J=3$ &$J=4$ & $J=5$\\
\hline
0.1 &	0.001474905 &	0.002069507 &	0.000369637 &	0.00113114 &	0.001487479 &	0.000259061\\
\hline
0.2 &	0.006086186 &	0.001069493 &	0.001645428 &	0.005167669 &	0.000897005 &	0.001372372\\
\hline
0.3 &	0.009008915 &	0.001450093 &	0.002134283 &	0.007975925 &	0.001289706 &	0.001904543\\
\hline
0.4 &	0.003667475 &	0.005595642 &	0.000921766 &	0.003368588 &	0.005128014 &	0.000846273\\
\hline
0.5 &	0.022361501 &	0.0113456 &	0.00571595 &	0.020981659 &	0.010626022 &	0.005347514\\
\hline
0.6 & 0.005773812 &	0.008548045 &	0.00143098 &	0.005431841 &	0.008059078 &	0.001348473\\
\hline
0.7 &	0.021218404 &	0.003549778 & 0.005335927 &	0.020059513 &	0.003355921 &	0.005044012\\
\hline
0.8 &	0.026592868 &	0.004415682 &	0.006607277 &	0.025115179 &	0.004172376 &	0.006247496\\
\hline
0.9 &	0.011047904 &	0.016519758 &	0.002752051 &	0.010394459 &	0.015558505 &	0.002592453\\
\hline
\textbf{$E$} &	\textbf{0.026592868} &	\textbf{0.016519758} &	\textbf{0.006607277} &	\textbf{0.025115179} &	\textbf{0.015558505} &	\textbf{0.006247496}\\
\hline
 \end{tabular}
  \end{center}
\end{table}

\begin{table}[H]
  \begin{center}
    \caption{Total residual error of problem \ref{P4_cat_dif} at varying values of $\alpha_1,~  \beta_1,~\alpha_2~\text{and}~ \beta_2$ for various value of $J.$}
    \label{P4_table2_cat_dif}
    \begin{tabular}{|c|c|c|c|c|c|c|}
    \hline
     \multirow{2}{*}{x} & \multicolumn{3}{c|}{$(\alpha_1,~\beta_1, ~\alpha_2,~\beta_2) = (1.85,~ 0.84,~ 1.84,~0.86)$} &  \multicolumn{3}{c|}{$(\alpha_1,~\beta_1, ~\alpha_2,~\beta_2) = (1.99,~ 0.999,~ 1.99,~0.999)$}\\
\cline{2-7}
&$J=3$ &$J=4$  &$J=5$ &$J=3$ &$J=4$ & $J=5$\\
\hline
0.1 & 0.001043474 &	0.001580631 &	0.000285226 &	0.000811452 &	0.001221587 &	0.000218482\\
\hline
0.2 & 0.004864753 &	0.000851715 &	0.001308136 &	0.004063223 &	0.000707617 &	0.001083906\\
\hline
0.3 & 0.007531036 &	0.001215191 &	0.001791164 &	0.00650489 &	0.001052725 &	0.001554292\\
\hline
0.4 & 0.003184477 &	0.004853347 &	0.000800194 &	0.002840758 &	0.004322004 &	0.000713301\\
\hline
0.5 & 0.01986764 &	0.01007493 &	0.005073756 &	0.018087284 &	0.009161733 &	0.004611055\\
\hline
0.6 & 0.005178906 &	0.00767315 &	0.001284615 &	0.004755726 &	0.007052825 &	0.001180399\\
\hline
0.7 & 0.019108768 &	0.003200382 &	0.004812836 &	0.017676957 &	0.002960286 &	0.004451387\\
\hline
0.8 & 0.023969195 &	0.003977493 &	0.005951558 &	0.022229476 &	0.003689323 &	0.005521236\\
\hline
0.9 & 0.009875031 &	0.01479991 &	0.002464505 &	0.009152846 &	0.013725496 &	0.002285511\\
\hline
\textbf{$E$} & \textbf{0.023969195} &	\textbf{0.01479991} &	\textbf{0.005951558} &	\textbf{0.022229476} &	\textbf{0.013725496} &	\textbf{0.005521236}\\
\hline
 \end{tabular}
  \end{center}
\end{table}

 \subsection{Fractional Concentration Of The Carbon Substrate And The Concentration Of Oxygen Problem}\label{P4_car_oxy}
We consider the fractional version of the concentration of carbon substrate and the concentration of oxygen problem,
\begin{equation}
\begin{aligned}
& \D^{\alpha_1} \y(\x) + \frac{2}{x} \D^{\beta_1} \y(\x) = -1 + \frac{5 \y(\x) \z(\x)}{\left(\frac{1}{10000} + \y(\x) \right)\left(\frac{1}{10000} + \z(\x) \right)} + \frac{\frac{1}{10} \y(\x) \z(\x)}{\left(\frac{1}{10000} + \y(\x) \right)\left(\frac{1}{10000} + \z(\x) \right)},\\
&\D^{\alpha_2} \z(\x) + \frac{2}{x} \D^{\beta_2} \z(\x) = \frac{\frac{1}{10} \y(\x) \z(\x)}{\left(\frac{1}{10000} + \y(\x) \right) \left(\frac{1}{10000} + \z(\x) \right)} + \frac{\frac{5}{100} \y(\x) \z(\x)}{\left(\frac{1}{10000} + \y(\x) \right) \left(\frac{1}{10000} + \z(\x) \right)},\\
& \y'(0)=0, ~ \z'(0)=0, \quad \y(1)=1, ~ \z(1)=1.
\end{aligned}
\end{equation}

\begin{itemize}
    \item[\textbf{(a)}] Figure \ref{P4_car_oxy_fig} visually represents our findings, with each column corresponding to a specific set of parameter values $\alpha_1, \beta_1, \alpha_2, \; \text{and} \; \beta_2.$ The first column presents the Haar solutions $\y(\x), \z(\x)$ alongside the residual errors in $r_1(\x)$ and $r_2(\x)$ for varying values of $J$ at $\alpha_1=1.62,~ \beta_1=0.62, ~\alpha_2=1.63,~ \beta_2=0.63$ respectively.
    \item[\textbf{(b)}]  The second column illustrates the corresponding solutions and residual errors for $\alpha_1=1.74,~ \beta_1=0.75,~ \alpha_2=1.75,~ \beta_2=0.73.$ The third column follows with solutions and residual errors at $\alpha_1=1.85,\; \beta_1=0.86,\; \alpha_2=1.86,\; \beta_2=0.85.$ The fourth column showcases results for $\alpha_1=1.999,\; \beta_1=0.99,\; \alpha_2=1.999,\; \beta_2=0.99.$
    \item[\textbf{(c)}] The final column presents the Haar solutions $\y(\x), ~\z(\x)$ alongside the residual error in $\y(\x)$ and $\z(\x)$ for various values of $J$ at $\alpha_1=2,\; \beta_1=1,\; \alpha_2=2,\; \beta_2=1.$ At $\alpha_1=\alpha_2=2, ~\beta_1 = \beta_2=1,$ this system of the equation reduces to the classical form of concentration of the carbon substrate and the concentration of oxygen equations, which is discussed in \cite{singh2020solving}.
    \item[\textbf{(d)}] To further support these observations table \ref{P4_table1_car_oxy} provides the computed total residual error for problem \ref{P4_car_oxy} for $J=3,\; 4,\; 5$ at  $\alpha_1=1.62,\; 1.74,~ \alpha_2=1.63,\;1.75,~ \beta_1=0.62, \;0.75, ~ \beta_2=0.63,\; 0.73.$ Similarly, Table \ref{P4_table2_car_oxy} presents the total residual error for $J=3,~ 4, ~5$ at  $\alpha_1=1.85, ~1.999, ~\alpha_2=1.86,~ 1.999,~ \beta_1=0.86,\; 0.99, ~\beta_2=0.85,~ 0.99.$ 
   \item[\textbf{(e)}] The effectiveness of the method is confirmed by the tables and figures, which show that the residual error steadily decreases as $J$ increases varying $\alpha_1, ~\beta_1,~ \alpha_2, ~ \text{and} ~ \beta_2$ and at $\alpha_1= \alpha_2=2,~\beta_1= \beta_2=1$ the residual error becomes negligible. The approach's accuracy and robustness are demonstrated by the comparable trend that is seen even when $J$ stays constant but $\alpha_1, ~\beta_1, ~\alpha_2,$ and $\beta_2 $ varying.
   \item[\textbf{(f)}] It is also observed that the coefficient matrix has a bounded condition number in all investigated instances. Additionally, the boundedness of the inverse matrix norm ensures the numerical stability of our method.
   \item[\textbf{(g)}] The final computed solution remains unaffected by variations in the initial solution, confirming that the proposed method is stable under all given set of parameters $(\alpha_1, ~\beta_1, ~\alpha_2, ~\beta_2)$.
   \end{itemize}
\begin{figure}[H]
  \centering
  \includegraphics[width=1.0\textwidth]{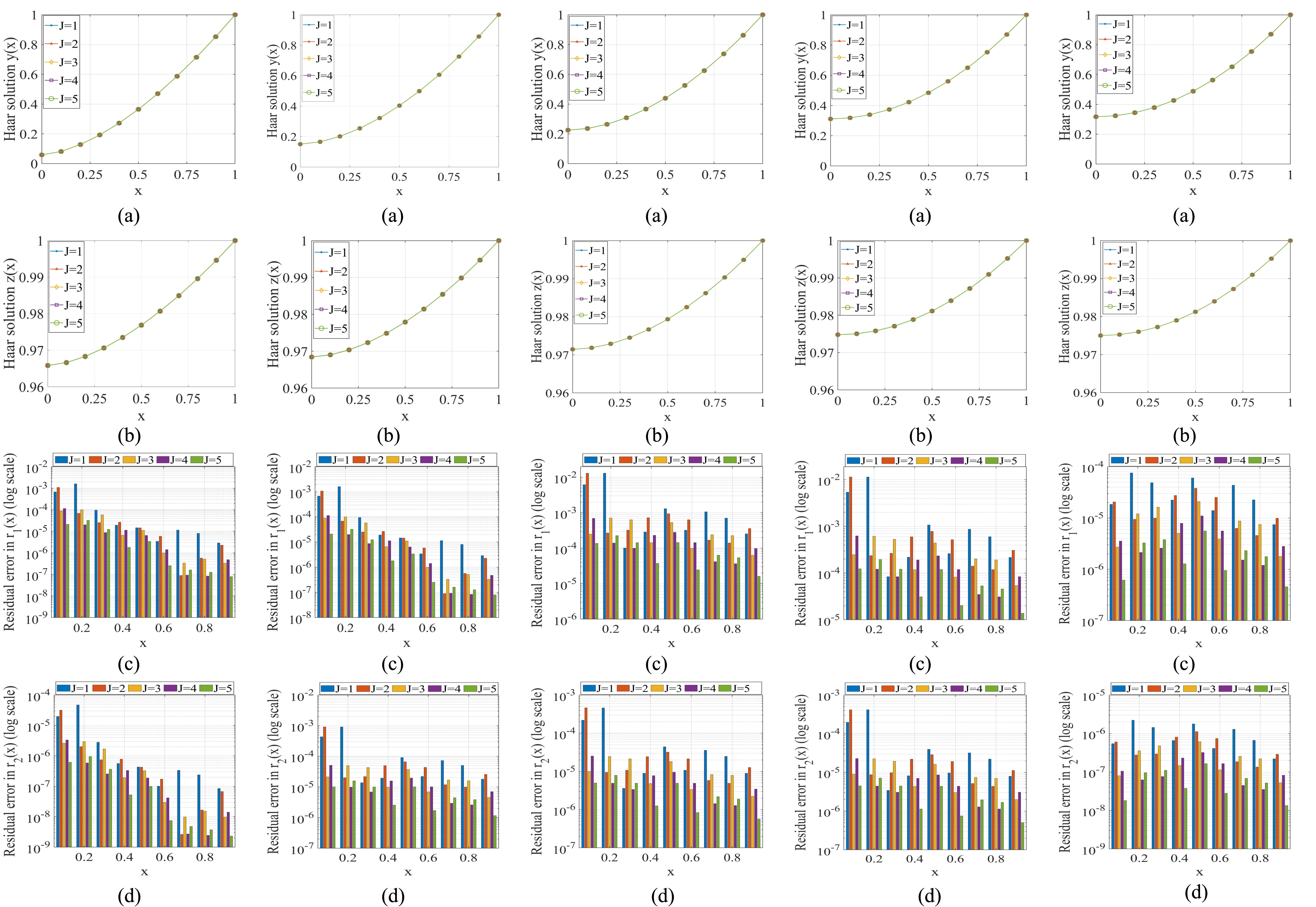} 
  \caption{Solution and error plots of example \ref{P4_car_oxy}, with varying parameters $\alpha_1$, $\beta_1$, $\alpha_2$, and $\beta_2$ arranged column-wise (from left to right): (a) $\alpha_1 = 1.62$, $\beta_1 = 0.62$, $\alpha_2 = 1.63$, $\beta_2 = 0.63$; (b) $\alpha_1 = 1.74$, $\beta_1 = 0.75$, $\alpha_2 = 1.75$, $\beta_2 = 0.73$; (c) $\alpha_1 = 1.85$, $\beta_1 = 0.86$, $\alpha_2 = 1.86$, $\beta_2 = 0.85$; (d) $\alpha_1 = 1.999$, $\beta_1 = 0.99$, $\alpha_2 = 1.999$, $\beta_2 = 0.99$; and (e) $\alpha_1 = 2$, $\beta_1 = 1$, $\alpha_2 = 2$, $\beta_2 = 1$.} 
  \label{P4_car_oxy_fig}
\end{figure}

\begin{table}[H]
  \begin{center}
    \caption{Total residual error of problem \ref{P4_car_oxy} at varying values of $\alpha_1,~  \beta_1,~\alpha_2~\text{and}~ \beta_2$ for various value of $J.$}
    \label{P4_table1_car_oxy}
    \begin{tabular}{|c|c|c|c|c|c|c|}
    \hline
     \multirow{2}{*}{x} & \multicolumn{3}{c|}{$(\alpha_1,~\beta_1, ~\alpha_2,~\beta_2) = (1.62, ~0.62, ~1.63,~0.63)$} &  \multicolumn{3}{c|}{$(\alpha_1,~\beta_1, ~\alpha_2,~\beta_2) = (1.74,~ 0.75, ~1.75,~0.73)$}\\
\cline{2-7}
&$J=3$ &$J=4$  &$J=5$ &$J=3$ &$J=4$ & $J=5$\\
\hline
0.1 &	8.97139E-05 &	0.000114013 &	2.11338E-05 &	0.000263448 &	0.000711335 &	0.000140386\\
\hline
0.2 &	0.000100115 &	1.99386E-05 &	3.23558E-05 &	0.00073694 &	0.000145201 &	0.000233376\\
\hline
0.3 &	5.77091E-05 &	8.74395E-06 &	1.24271E-05 &	0.000652171 &	0.000102411 &	0.000148087\\
\hline
0.4 &	6.5469E-06 &	1.13029E-05 &	1.79723E-06 &	0.000145553 &	0.000234338 &	3.79679E-05\\
\hline
0.5 &	1.11208E-05 &	6.42343E-06 &	3.41505E-06 &	0.00053632 &	0.000283636 &	0.000145605\\
\hline
0.6 &	1.00446E-06 &	1.41904E-06 &	2.54446E-07 &	9.95607E-05 &	0.000143927 &	2.45117E-05\\
\hline
0.7 &	3.35035E-07 &	9.2542E-08 &	1.62568E-07 &	0.000239534 &	4.14819E-05 &	6.33714E-05\\
\hline
0.8 &	5.16214E-07 &	8.34868E-08 &	1.27E-07 &	0.000225345 &	3.65178E-05 &	5.39631E-05\\
\hline
0.9 &	3.33228E-07 &	4.7882E-07 &	7.92003E-08 &	6.33718E-05 &	9.78497E-05 &	1.60947E-05\\
\hline
\textbf{$E$}	& \textbf{0.000100115} &	\textbf{0.000114013} &	\textbf{3.23558E-05} & \textbf{0.00073694} &	\textbf{0.000711335} &	\textbf{0.000233376}\\
\hline
 \end{tabular}
  \end{center}
\end{table}

\begin{table}[H]
  \begin{center}
    \caption{Total residual error of problem \ref{P4_car_oxy} at varying values of $\alpha_1,~\beta_1,~\alpha_2~\text{and}~ \beta_2$ for various value of $J.$}
    \label{P4_table2_car_oxy}
    \begin{tabular}{|c|c|c|c|c|c|c|}
    \hline
     \multirow{2}{*}{x} & \multicolumn{3}{c|}{$(\alpha_1,~\beta_1, ~\alpha_2,~\beta_2) = (1.85,~ 0.86,~ 1.86,~0.85)$} &  \multicolumn{3}{c|}{$(\alpha_1,~\beta_1, ~\alpha_2,~\beta_2) = (1.999,~ 0.99, ~1.999,~0.99)$}\\
\cline{2-7}
&$J=3$ &$J=4$  &$J=5$ &$J=3$ &$J=4$ & $J=5$\\
\hline
0.1 & 0.000251435 &	0.000695738 &	0.000137562 &	0.000248841 &	0.000626827 &	0.000123713\\
\hline
0.2 & 0.000713448 &	0.000140733 &	0.000226264 &	0.000616155 &	0.000121868 &	0.000195896\\
\hline
0.3 & 0.000635453 &	9.98914E-05 &	0.000144507 &	0.000530612 &	8.38865E-05 &	0.000121726\\
\hline
0.4 & 0.000143733 &	0.000231169 &	3.74694E-05 &	0.000118666 &	0.000190845 &	3.09928E-05\\
\hline
0.5 & 0.000535111 &	0.000282798 &	0.000145128 &	0.000443924 &	0.000234036 &	0.000119921\\
\hline
0.6 & 9.98045E-05 &	0.000144303 &	2.4571E-05 &	8.248E-05 &	0.000119746 &	2.0363E-05\\
\hline
0.7 & 0.000240893 &	4.17075E-05 &	6.37099E-05 &	0.000201981 &	3.48932E-05 &	5.32375E-05\\
\hline
0.8 & 0.000226818 &	3.67545E-05 &	5.43123E-05 &	0.000191144 &	3.10581E-05 &	4.59741E-05\\
\hline
0.9 & 6.37848E-05 &	9.84851E-05 &	1.61986E-05 &	5.45404E-05 &	8.40578E-05 &	1.38439E-05\\
\hline
\textbf{$E$}	&  \textbf{0.000713448} &	\textbf{0.000695738} &	\textbf{0.000226264}& 	\textbf{0.000616155} & 	\textbf{0.000626827} &	\textbf{0.000195896}\\
\hline
 \end{tabular}
  \end{center}
\end{table} 

\section{Conclusion}\label{P4_conclusion}
In this study, we introduce a new class of coupled fractional Lane-Emden equation \eqref{P4_problem} with conditions \eqref{P4_condition}. We examine these conditions in two different cases and perform numerical simulations to analyze the behavior of the system. We implement the fractional Haar wavelet collocation method combined with the Newton-Raphson method to analyze the proposed problem. We also establish the convergence of our approach, demonstrating its efficiency and reliability. To assess the method’s performance, we conduct five numerical experiments, illustrating its real-world applications.  Our results, presented through figures and tables, show that residual errors decrease as the maximum level of resolution $J$ increases while keeping the fractional order derivatives $\alpha_1,~ \beta_1,~ \alpha_2 \;\text{and}\; \beta_2$ fixed and similar trends occur when $\alpha_1,~ \beta_1,~ \alpha_2 \;\text{and}~ \beta_2$ vary and $J$ is fixed. At $\alpha_1 = \alpha_2 = 2$ and $\beta_1 = \beta_2 = 1$, the problem reduces back to the classical form of the coupled Lane Emden equations. In figures and tables, it can be observed that the residual error becomes negligible as we increase the value of the maximum level of resolution $J,$ at $\alpha_1 = \alpha_2 = 2$ and $\beta_1 = \beta_2 = 1,$ reinforcing the accuracy of the proposed method. These outcomes underscore the reliability of the fractional Haar wavelet collocation method in addressing such types of problems.  Our approach is novel and provides a foundation for further research in various areas where fractional differential equations play a key role.

\section*{Acknowledgement}
The first author is very much grateful to all the members of our research group at IIT Patna for their support and help. The work is financially supported to the first author by University Grants Commission(UGC) - (December 2019)/2019(NET/Joint CSIR-UGC), NTA Ref: no. 191620007135, New Delhi, India.

\section*{Conflict of interest}
The authors don't have any conflict of interest to disclose.
\bibliography{fractional_coupled_system}
\bibliographystyle{plain}

\end{document}